\theoremstyle{plain}
\newtheorem{thm}{Theorem}[section]
\newtheorem{prop}[thm]{Proposition}
\theoremstyle{definition}
\newtheorem{remark}[thm]{Remark}
\newtheorem{definition}[thm]{Definition}
\newcommand{\p}{\mathfrak p}
\newcommand{\Q}{\mathbf Q}
\newcommand{\Ql}{\Q_\ell}
\newcommand{\C}{\mathbf C}
\newcommand{\Z}{\mathbf Z}
\newcommand{\Zhat}{\hat{\Z}}
\newcommand{\Fpbar}{\overline{\mathbf F}_p}
\newcommand{\Qpbar}{\overline{\Q}_p}
\newcommand{\Qp}{\Q_p}
\newcommand{\Fp}{{\mathbf F}_p}
\newcommand{\Qbar}{\overline{\Q}}
\renewcommand{\O}{\mathcal O}
\newcommand{\MF}{\mathcal{MF}}
\newcommand{\WFL}{\mathcal{W}_{\text{FL}}}
\newcommand{\Rep}{\mathrm{Rep}}
\newcommand{\A}{\mathbf A}
\newcommand{\R}{\mathbf R}
\newcommand{\Gl}{G_\ell}
\newcommand{\Gp}{G_p}
\newcommand{\GQSp}{G_{\Q,S \cup \{p\}}}
\newcommand{\Il}{I_\ell}
\newcommand{\Hl}{H^2_{\text{loc}}}
\newcommand{\Hg}{H^2_{\text{glo}}}
\newcommand{\rhob}{\bar{\rho}}
\newcommand{\kp}{k_\p}
\newcommand{\Kp}{\kp}
\newcommand{\Kpp}{K_{\pi,\p}}
\newcommand{\rpp}{\rho_{\pi,\p}}
\newcommand{\rbpp}{\rhob_{\pi,\p}}
\newcommand{\rppl}{\rho_{\pi,\p}|_{G_\ell}}
\newcommand{\rbppl}{\rhob_{\pi,\p}|_{G_\ell}}
\newcommand{\rppls}{\rho_{\pi,\p}|_{G_\ell}^{\ssi}}
\newcommand{\rbppls}{\rhob_{\pi,\p}|_{G_\ell}^{\ssi}}
\newcommand{\rss}{\rho_{\psi,\p}}
\newcommand{\rbss}{\rhob_{\psi,\p}}
\renewcommand{\wp}{\omega_{\psi}}
\newcommand{\wpp}{\omega_{\psi,\p}}
\newcommand{\cop}{\chi_{1,\p}}
\newcommand{\ctp}{\chi_{2,\p}}
\newcommand{\cp}{\chi_\p}
\newcommand{\cbp}{\bar{\chi}_\p}
\newcommand{\cycp}{\nu_\p}
\newcommand{\cycbp}{\bar{\nu}_\p}
\newcommand{\tp}{t_{\p}}
\renewcommand{\sp}{\sigma_\p}
\newcommand{\Ca}{\mathcal C}
\newcommand{\Sets}{\text{Sets}}
\newcommand{\univ}{\text{univ}}
\newcommand{\Ru}{R^{\univ}}
\newcommand{\rhou}{\rho^{\univ}}
\DeclareMathOperator{\ab}{ab}
\DeclareMathOperator{\ad}{ad}
\DeclareMathOperator{\diag}{diag}
\DeclareMathOperator{\Ext}{Ext}
\DeclareMathOperator{\Gal}{Gal}
\DeclareMathOperator{\GSp}{GSp}
\DeclareMathOperator{\ssi}{ss}
\DeclareMathOperator{\St}{St}
\DeclareMathOperator{\GL}{GL}
\DeclareMathOperator{\Hom}{Hom}
\DeclareMathOperator{\tr}{tr}
\DeclareMathOperator{\Ind}{Ind}
\DeclareMathOperator{\End}{End}
\DeclareMathOperator{\Frob}{Frob}
\newcommand{\gsp}{\mathfrak{gsp}}
\newcommand{\ar}{\ad \rpp}
\newcommand{\arb}{\ad \rbpp}
\newcommand{\arl}{\ad \rpp|_{G_\ell}}
\newcommand{\arbl}{\ad \rbpp|_{G_{\ell}}}
\newcommand{\artsl}{\arb(1)|_{G_{\ell}}^{\ssi}}
\newcommand{\trr}{{\bf 1}}
\newcommand{\inj}{\hookrightarrow}
\begin{document}

\title{Unobstructed deformation problems for $\GSp(4)$}
\author{Michael Broshi}
\email{michaelbroshi@gmail.com}
\address{The MathWorks, 1 Apple Hill Drive, Natick, MA 01760 }
\author{Mohammed Zuhair Mullath}
\email{mullath@math.umass.edu}
\address{Department of Mathematics and Statistics, University of Massachusetts Amherst, Amherst, MA 01003}
\author{Claus Sorensen}
\email{csorensen@ucsd.edu}
\address{Department of Mathematics, UC San Diego, La Jolla, CA 92093}
\author{Tom Weston}
\email{weston@math.umass.edu}
\address{Department of Mathematics and Statistics, University of Massachusetts Amherst, Amherst, MA 01003}

\date{\today}

\maketitle

\section{Introduction}

Let $\pi$ be a cuspidal automorphic representation for $\GSp(4)$ over $\Q$.  We assume that:
\begin{enumerate}
\item The archimedean component $\pi_{\infty}$ of $\pi$ is essentially discrete series with the same central
and infinitesimal character as the finite-dimensional irreducible algebraic representation of highest weight $a \geq b \geq 0$;
\item $\pi$ has a unitary twist of the form $\pi \otimes ||\cdot||^{w/2}$, with $w \in \Z$;
\item $\pi$ is globally generic (i.e., $\pi$ has a non-zero Whittaker-Fourier coefficient).
\end{enumerate}
Let $S$ denote the set of primes for which the local component of $\pi$ is ramified.  For a prime
$\p$ of the coefficient field of $\pi$ (a number field generated by the Satake parameters of the unramified components of $\pi$), let $\kp$ denote the corresponding residue field.  There is then
an associated Galois representation
$$\rbpp : \GQSp \to \GSp_4(\kp);$$
here $\GQSp$ denotes the Galois group of the maximal extension of $\Q$ unramified outside
$S$ and the residue characteristic $p$ of $\kp$.  When $\rbpp$ is absolutely irreducible, let $R_{\pi,\p}$ denote the associated universal
deformation ring: it is a quotient of a power series ring in $d_1$ variables over $W(\kp)$ by $d_2$ relations, where
$$d_i = \dim_{\kp} H^i(\GQSp,\ad \rbpp)$$
with $\ad \rbpp$ the $10$ dimensional adjoint representation of $\rbpp$.
Furthermore, $d_1-d_2$ is easily computable via an Euler characteristic calculation, equalling $5$ or $7$ depending on $\pi$.
In particular, when $d_2=0$, the ring $R_{\pi,\p}$ is simply isomorphic to a power series in $5$ or $7$ variables over $W(\kp)$; we then say that
the deformation theory of $\rbpp$ is {\it unobstructed}.

The Poitou--Tate exact sequence gives a natural way to decompose the obstruction space $H^2(\GQSp,\ad \rbpp)$ into a subspace
$\Hl(\GQSp,\ad \rbpp)$ of local obstructions and a quotient $\Hg(\GQSp,\ad \rbpp)$ of global obstructions.  (Note that no assumption on
irreducibility is required for these definitions.)  The goal of this paper is to prove the following.

\begin{thm}
Let $\pi$ be as above.  Assume also that $\pi$ is not supercuspidal at $2$.
If $a > b > 0$, then $\Hl(\GQSp,\ad \rbpp)=0$ for all but finitely many primes $\p$ such that $\rbpp(G_{\Q(\zeta_p)}) \subset \GL_4(\Fpbar)$ is an adequate subgroup.
\end{thm}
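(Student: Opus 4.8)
The plan is to read off the local obstructions place by place via Poitou--Tate duality. The Poitou--Tate exact sequence exhibits $\Hl(\GQSp,\arb)$ as the image of the localization map $H^2(\GQSp,\arb)\to\bigoplus_{v\in S\cup\{p,\infty\}}H^2(G_v,\arb)$, hence as a subspace of $\bigoplus_v H^2(G_v,\arb)$, so it suffices to show that for all but finitely many $\p$ one has $H^2(G_v,\arb)=0$ for every $v\in S\cup\{p,\infty\}$; the adequacy hypothesis, which serves only to make $R_{\pi,\p}$ exist, plays no role in this argument. Since $\arb$ is the $10$-dimensional adjoint $\GSp_4$-representation it is self-dual (via the Killing form, nondegenerate for $p$ large), so local Tate duality identifies $H^2(G_v,\arb)$ with the $\kp$-linear dual of $H^0(G_v,\arb(1))$, and the task becomes showing that the trivial representation does not occur as a $G_v$-subrepresentation of $\arb(1)=\arb\otimes\cycbp$.

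I would dispose of $v=\infty$ and $v=p$ directly. At $v=\infty$ we have $H^2(G_\infty,\arb)=0$ for $p$ odd, since $G_\infty$ has order $2$. At $v=p$ the component $\pi_p$ is unramified (as $p\notin S$), so $\rpp|_{\Gp}$ is crystalline with Hodge--Tate weights $0<h_2<h_3<h_2+h_3$ determined by $(a,b)$ (with $h_1=0$ and $h_4=h_1+h_4=h_2+h_3$ by the symplectic relation); hence those of $\arb$ are $0$ (with multiplicity $2$) together with $\pm h_2,\pm h_3,\pm(h_3-h_2),\pm(h_2+h_3)$. The hypothesis $a>b>0$ is exactly what forces $1\notin\{h_2,h_3,h_3-h_2,h_2+h_3\}$ --- the only ways to fail being $h_2=1$ ($b=0$) and $h_3-h_2=1$ ($a=b$) --- so $\arb$ has no Hodge--Tate weight $\pm1$, and $\arb(1)$ therefore has no Hodge--Tate weight $0$. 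For $p$ beyond an explicit bound in terms of $a+b$, a cyclotomic twist places $\arb(1)$ and the trivial representation in the Fontaine--Laffaille range, where the functor to filtered Frobenius modules is fully faithful and exact; as the image of any nonzero map out of the (simple) unit object would be a subobject of Hodge--Tate weight $0$, this gives $H^0(\Gp,\arb(1))=0$, and hence $H^2(\Gp,\arb)=0$.

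For the finite places $v\in S$ with $v\neq p$ I would argue via inertia invariants and purity. By Grothendieck's monodromy theorem $\rpp|_{G_v}$ carries a monodromy operator $N$, and $H^0(G_v,\arb(1))=\ker\!\big(\Frob_v-1\mid(\arb(1))^{I_v}\big)$ with $(\arb(1))^{I_v}=(\arb)^{I_v}\otimes\cycbp$ and $(\arb)^{I_v}\subseteq\ker(\ad N)$. Now $\ad\rpp$ is pure of weight $0$ (a consequence of the purity of $\rpp$), so in its monodromy-weight filtration $\ker(\ad N)$ lies in the part of weight $\leq 0$; the Frobenius eigenvalues of $(\ad\rpp)^{I_v}$ are thus algebraic numbers of archimedean absolute value $\leq 1$, and those of $(\ad\rpp(1))^{I_v}$ have absolute value $\leq v^{-1}<1$. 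For all but finitely many $\p$ the $G_v$-module $\arb$ comes from a $G_v$-stable $\O_\p$-lattice in $\ad\rpp$ whose formation commutes with taking $I_v$-invariants, so the Frobenius eigenvalues of $(\arb(1))^{I_v}$ are the reductions modulo $\p$ of those of $(\ad\rpp(1))^{I_v}$; since each such eigenvalue $\lambda$ satisfies $\lambda\neq 1$ as an algebraic number, $\lambda\equiv 1\pmod{\p}$ holds for only finitely many $\p$. Hence $H^0(G_v,\arb(1))=0$, and so $H^2(G_v,\arb)=0$, for all but finitely many $\p$. Ranging over the finitely many $v\in S$ and combining with $\infty$ and $p$ yields $\Hl(\GQSp,\arb)=0$ for all but finitely many $\p$, and in particular for all but finitely many $\p$ with $\rbpp(G_{\Q(\zeta_p)})$ adequate.

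The cohomological reductions above are routine; the real content, and the main obstacle, is the automorphic input they rest on. What is needed is the purity ("Ramanujan") of $\rpp$ at the ramified primes $v\neq p$, equivalently purity of the Weil--Deligne parameters of $\rpp|_{G_v}$; this is available for the Galois representations attached to globally generic cuspidal $\pi$ on $\GSp(4)$, with the assumption that $\pi$ is not supercuspidal at $2$ securing the construction and the requisite local behavior of $\rpp$ at $2$, and with $a>b>0$ (together with global genericity, which also places $\pi$ in general type) controlling the Hodge--Tate weights at $p$. Granted these, the vanishing of the local $H^2$ follows by the bookkeeping indicated --- twisting into the Fontaine--Laffaille range at $p$, and checking that inertia invariants commute with reduction at the $v\in S$. (To obtain $\Hl=0$ for \emph{every} admissible $\p$ one would additionally have to examine the finitely many residual primes excluded here, where the precise monodromy filtration of $\mathrm{rec}(\pi_v)$ would enter.)
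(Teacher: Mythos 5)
Your cohomological bookkeeping is correct as far as it goes: the Poitou--Tate reduction to place-by-place vanishing of $H^0(G_v,\arb(1))$, the handling of $v=\infty$ for $p$ odd, and the Fontaine--Laffaille argument at $v=p$ (including the observation that $a>b>0$ is exactly what keeps the Hodge--Tate weights of $\arb$ away from $\pm1$) all match the paper. But there is a genuine gap at the finite places $v\in S$, $v\neq p$, and it is exactly where the real work of the paper lives.

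The false step is the sentence asserting that ``for all but finitely many $\p$ the $G_v$-module $\arb$ comes from a lattice whose formation commutes with taking $I_v$-invariants,'' so that Frobenius eigenvalues on $(\arb(1))^{I_v}$ are reductions of those on $(\ad\rpp(1))^{I_v}$. There is no local reason for this, and it fails precisely in the Steinberg-type cases. In Group IV, for instance, $\rpp|_{G_\ell}$ is an iterated nonsplit extension with $N\neq 0$, and $(\ad\rpp)^{I_\ell}$ is cut out by $\ker(\ad N)$; but mod $\p$ one or more of the extension classes can die, making $\rbpp|_{G_\ell}$ closer to a direct sum of $\cycbp^{\pm 3/2},\cycbp^{\pm 1/2}$. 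Then $\arb^{I_\ell}$ is strictly larger than the reduction of $(\ad\rpp)^{I_\ell}$, and $\arb(1)^{I_\ell}$ acquires new Frobenius-eigenvalue-$1$ lines (e.g.\ from the $\cycbp^{-1}$ summand of $\arb$). Purity constrains the characteristic-zero eigenvalues, but tells you nothing about whether the nilpotent part of the Weil--Deligne parameter survives reduction. Establishing this survival for all but finitely many $\p$ is exactly the content of Proposition 6.3 (``$\rbpp|_{G_\ell}$ is generic for all but finitely many $\p$''), and the paper's proof of it is global, via the optimal-lift/level-lowering theorem of Gee--Geraghty (Theorem 6.2) applied to the paramodular level of $\pi$. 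This is also where the adequacy hypothesis enters --- not to make $R_{\pi,\p}$ exist as you suggest, but as an input to the automorphy-lifting/level-lowering machinery. Your purity argument does handle the supercuspidal case correctly (where $N=0$ and inertia invariants commute with reduction once $p$ avoids the order of the finite image), so for those $v$ your route is actually simpler than the paper's group-theoretic Section 7 and doesn't need $\ell\neq 2$. But for the non-supercuspidal ramified primes the gap is fatal: without level lowering or some replacement for it, you cannot rule out that infinitely many $\p$ see a degeneration of the local monodromy, and the claimed vanishing of $H^0(G_\ell,\arb(1))$ does not follow from purity alone.
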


\begin{remark}
The large image (adequate) hypothesis is required for the level lowering approach in section 6 to work. It is expected that for $\pi$ which is neither CAP nor endoscopic, $\rbpp(G_{\Q(\zeta_p)})$ is adequate for all but finitely many $\rbpp$ in the compatible system. This is known to be true for primes $\p$ above a set of rational primes of Dirichlet density 1 by \cite[Theorem B]{Dinakar} and \cite[Proposition 5.3.2]{BLGGT}. We also remark that for $p \geq 11$, adequacy of $\rbpp(G_{\Q(\zeta_p)})$ is equivalent to absolute irreducibility of $\rbpp|_{G_{\Q(\zeta_p)}}$ (\textit{cf. }\cite[Theorem 9]{GHTT}).
\end{remark}

\begin{remark}
The assumption at $2$ could perhaps be removed with some additional group theory calculations.
\end{remark}

\begin{remark}
We expect the conclusion to hold even when $a=b$ or $b=0$, but are unable to prove it with our present methods.  It is perhaps not too difficult to prove it at least
for a set of primes of density one, but we have not pursued that result here.
\end{remark}

\begin{remark} It is not strictly necessary to assume that $\pi$ is globally generic. An analogous (but weaker) assumption on the global $L$-packet containing $\pi$ suffices. (See remark 3.2 for details.) In particular, Theorem 1.1 applies to all classical Holomorphic Siegel cusp forms of cohomological weights $k_1 \geq k_2 \geq 3$ which are not supercuspidal at 2. (The Harish-Chandra parameter of such a form is $(a, b) = (k_1-1, k_2-2)$.)
\end{remark}


Combined with the Taylor--Wiles results of \cite{GeT} it then in principle becomes possible to give examples of $\pi$ for which the deformation theory of
$\rbpp$ is unobstructed for all but finitely many $\p$.  We have not pursued such examples here.

 
We are indebted to Toby Gee for guidance on level lowering in Section 6 and to Wee Teck Gan and Kimball Martin for aid with Section 7.  We also thank
Matthew Emerton, Farshid Hajir, Jeff Hatley, Robert Pollack and Siman Wong for helpful conversations.

\section{Notation}

For a prime $\ell$, we write $\Gl$ for the absolute Galois group of $\Ql$.
If $\chi : \Ql^{\times} \to \C^{\times}$ is of arithmetic type, we write $\chi_\p : \Gl \to \Qpbar^{\times}$ for
the corresponding $p$-adic Galois character; here $\p$ is a prime of the image of $\chi$ containing $p$.  We write
$\nu : \Ql^{\times} \to \C^{\times}$ for the norm character; its associated character $\cycp : \Gl \to \Qp^{\times}$ is the
$\p$-adic cyclotomic character, with reduction $\cycbp : \Gl \to \Fp^{\times}$. If $V$ is a Galois
module, we write $V(1)$ for its cyclotomic Tate twist.

Let $\GSp_4$ denote the closed subscheme of $\GL_{4/\Z}$ of matrices preserving the symplectic form
$$J = \left[ \begin{array}{cccc} &&&1 \\ &&1& \\ &-1&& \\ -1&&& \end{array}\right].$$
We write $\gsp_4$ for the Lie algebra of $\GSp_4$. 

\section{Galois representations for $\GSp(4)$}

In this section we briefly review the known facts for Galois representations associated to automorphic representations for
$\GSp(4)$.   Let $\pi = \pi_{f} \otimes \pi_{\infty}$ be an irreducible cuspidal automorphic
representation of $\GSp_4(\A_{\Q})$ such that $\pi^\circ := \pi  \otimes || \cdot||^{w/2}$ is unitary for some $w \in \Z$. We assume that $\pi_{\infty}$ belongs to an (essentially) discrete series representation of $\GSp_4(\R)$ of (Harish-Chandra) weight $(a, b)$ for some integers  $a \geq b \geq 0$.  Associated to such a $\pi$
is a compatible system of Galois representations \cite[Theorem I]{Wei} constructed by Taylor, Laumon and Weissauer. 
More precisely, $\pi'_f = \pi_f^\circ \otimes || \cdot||^{-(a+b-3)/2}$ is defined over a number field $E$ \cite[\S 1]{Wei}:
$$ E := \{\alpha \in \C \mid \sigma(\alpha) = \alpha \; \forall
\sigma \in \textrm{Aut}(\C) \,\text{~for which~}\, \;(\pi'_f)^{\sigma} \simeq \pi'_f\}.$$ 

Let $S$ be the finite set of primes at which $\pi$ ramifies (including the archimedean place).
For each rational prime $p$ and a choice of an embedding $\iota: E \inj \Qpbar$ there
is a continuous semi-simple representation
\begin{equation}
 \rho_{\pi, p, \iota}: \Gal(\Qbar/\Q) \rightarrow \GSp_4(\Qpbar),\label{galrep1}
\end{equation}
 characterized up to isomorphism by:
 \begin{itemize}
\item  $\rho_{\pi, p, \iota}$ is unramified outside $S \cup \{p\}$;
\item  for each $\ell\not\in S \cup \{p\}$ the characteristic polynomial of $\rho_{\pi, p, \iota}(\Frob_\ell)$  matches with the local $L$-factor of the
local component $\pi_\ell$. More precisely, let  $P_\ell(X) = (1 - \alpha_\ell X)(1 - \beta_\ell X)(1 - \gamma_\ell X)(1 - \delta_\ell X)$ be the Hecke polynomial of $\pi'_\ell$ (defined via its Hecke parameters $\textrm{diag}(\alpha_\ell, \beta_\ell, \gamma_\ell, \delta_\ell) \subset \GSp_4(\C)$). Then  $P_\ell(X) \in E[X]$, and one has
$$ \det(1 - \rho_{\pi, p, \iota}(\Frob_\ell) X) = \iota(P_\ell(X)),$$
where
$$P_\ell(\ell^{-s})^{-1} =: L(\pi_\ell, s - \kappa/2) ,$$
 is the $L$ factor of $\pi_\ell$ (with algebraic normalization involving the shift by $\kappa/2 := (a+b)/2 $). 
\end{itemize}

Choosing an embedding $\iota: E \inj \Qpbar$ determines a prime $\p|p$ in $E$. By enlarging $E$ if necessary, the Galois representations \eqref{galrep1} can be assumed to take values in $E_\p$. We shall thus denote $\rho_{\pi, p, \iota}$ by $\rpp$ henceforth to emphasize the fact that they form a compatible system, as $\p$ varies, in the sense of Serre. 

The compatible system $\{\rho_{\pi, \p}\}_\p$ satisfies a more elaborate local-global compatibility even at the bad primes $S \cup \{p\}$. (This full local-global compatibility plays a crucial role in the present work.) To state this, we first introduce the archimedean $L$-parameter $\phi_{(w; a, b)}$ to which $\pi_\infty$ must belong.
Let $w+1 \equiv a+b \mod 2$. The $L$-parameter $\phi_{(w; a, b)}: W_{\R} = \C^\times \sqcup \C^\times j \rightarrow \GSp_4(\C)$ takes the form $$ z \mapsto |z|^{-w}\cdot \begin{pmatrix} & (z/\overline{z})^{\frac{a+b}{2}} & & & \\ & & (z/\overline{z})^{\frac{a-b}{2}} & & \\
& & &(z/\overline{z})^{-\frac{a-b}{2}} & \\ & & & & (z/\overline{z})^{-\frac{a+b}{2}} \end{pmatrix} $$
on $\C^\times$, and $$ \phi_{(w; a, b)}(j) = \begin{pmatrix} & & & & 1 \\ & & &1 & \\
& &(-1)^{w+1} & & \\ &(-1)^{w+1} & & & \end{pmatrix}.$$

Now, fix isomorphisms $\iota_p: \Qpbar \overset{\sim}{\longrightarrow} \C$ consistent with
the two  embeddings  $E \inj \Qpbar$ and $E \subseteq \C$.

\begin{thm}(Weissauer, Urban, Sorensen, Mok)
Suppose $\pi$ is globally generic and that $\pi  \otimes || \cdot||^{w/2}$ is unitary for some $w \in \Z$. Assume that $\pi_{\infty}$ belongs to the (essentially) discrete series $L$-parameter $\phi_{(w; a, b)}$ with weights $a > b > 0$. Then for any prime $\p$ the
Galois representation $\rpp$ satisfies
 the following properties:

\begin{enumerate}[label = \roman*.)]
\item $\rho_{\pi, \p}$ is unramifed outside $S \cup\{p\},$ where $S$ is the finite set of rational primes at which $\pi$ is ramified together with the archimedean place $\infty$. 
\item for every $\ell \neq p$, full local-global compatibility is satisfied:
\begin{equation} \iota_{p}\, WD (\rppl)^{Fr-ss} = \textrm{rec}_{GT}(\pi_\ell \otimes |c|^{-3/2}), \label{lg2} 
\end{equation} where $\textrm{rec}_{GT}$ is the local Langlands correspondence for $\GSp(4)$ established by Gan-Takeda \cite{GT}.  Here
$c: \GSp_4 \rightarrow \GL_1$ is the similitude character. 

\item $\rpp$ restricted to ${G_{\Q_p}}$ is de Rham with Hodge--Tate weights $\{\delta, \delta + b, \delta+a, \delta+a+b \}$, where $\delta = \frac{1}{2}(w+3 - a-b)$.

\item If $\pi_p$ is unramified, then $\rpp$ is crystalline and we have 
$$\iota_p \det(1 - \varphi X|_{{\bf D}_{cris}(\rpp)})  = P_p(X),$$ where $\varphi$ is the crystalline Frobenius.

\item $\rpp$ is pure of weight $\kappa := w+3$,
i.e., the eigenvalues of $\iota_{p}\,\rpp(\Frob_\ell)$ have absolute value $\ell^{\kappa/2}$ for all $\ell
\not\in S\cup\{p\}$.
\item The similitude character of $\rpp$ is odd: $c\circ \rpp(c_{\infty}) = -1$, for any complex conjugation $c_{\infty} \in G_{\Q}$; furthermore $$ c \circ \rpp = \omega_{\pi^\circ}^{-1}\,\nu_p^{-a-b}.$$ Here $\omega_{\pi^\circ}$ is the finite order character corresponding to the central character of $\pi^\circ$ and $\nu_p$ is the $p$-adic cyclotomic character.
\end{enumerate} 
\label{mainthm} \end{thm}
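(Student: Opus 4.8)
The plan is to prove the theorem by combining the cohomological construction of the compatible system $\{\rpp\}$ with the functorial lift $\GSp_4\rightsquigarrow\GL_4$, importing along the lift the by now essentially complete package of results for $\GL_4$. The system itself is realised in the étale cohomology of Siegel modular threefolds (Taylor--Laumon--Weissauer), which already yields (i) and the compatibility of characteristic polynomials of $\Frob_\ell$ with $\iota(P_\ell(X))$ at the good primes. For the refined statements (ii)--(vi) I would first attach to $\pi$ an isobaric automorphic representation $\Pi$ of $\GL_4(\A_\Q)$ via the generic transfer, made precise through Arthur's endoscopic classification for $\GSp_4$ (as in the work of Sorensen and of Mok): one has $\mathrm{rec}(\Pi_\ell)=\mathrm{rec}_{GT}(\pi_\ell)$ composed with the standard embedding $\GSp_4(\C)\inj\GL_4(\C)$ at every finite place, and $\Pi_\infty$ corresponds to the composite of $\phi_{(w;a,b)}$ with that embedding. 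Since $\pi$ is globally generic it is not CAP, so $\Pi$ is either cuspidal (general type) or an isobaric sum $\Pi_1\boxplus\Pi_2$ of cuspidal $\GL_2$-forms (Yoshida type); because $a>b>0$, in both cases $\Pi$ is regular algebraic and essentially self-dual of \emph{symplectic} type, with $\Pi^\vee\cong\Pi\otimes\chi$ for a Hecke character $\chi$ whose $\p$-adic avatar $\chi_\p$ will be, up to inversion, the similitude character.

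To this $\Pi$ one attaches the Galois representation $\rho_{\Pi,\p}\colon\Gal(\Qbar/\Q)\to\GL_4(\Qpbar)$ furnished by the work of Harris--Taylor, Taylor--Yoshida, Shin, Chenevier--Harris, Caraiani and others (in the Yoshida case this is just Eichler--Shimura--Deligne for $\GL_2$): it is unramified outside $S\cup\{p\}$, de Rham at $p$ with Hodge--Tate weights determined by the infinitesimal character of $\Pi_\infty$, crystalline at $p$ when $\Pi_p$ is unramified, and satisfies full local--global compatibility at every finite $\ell\ne p$, namely $\iota_p\,WD(\rho_{\Pi,\p}|_{\Gl})^{Fr-ss}\cong\mathrm{rec}(\Pi_\ell\otimes|\det|^{-3/2})$, with the parallel $p$-adic Hodge statement at $\ell=p$. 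The symplectic self-duality forces $\rho_{\Pi,\p}^\vee\cong\rho_{\Pi,\p}\otimes\chi_\p$ with the pairing alternating, so $\rho_{\Pi,\p}$ factors through $\GSp_4(\Qpbar)$; comparing characteristic polynomials of Frobenius and invoking Chebotarev identifies it with $\rpp$.

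It then remains to read off (ii)--(vi). For (ii) one unwinds the two compatibilities (local Langlands for $\GL_4$ on the Galois side, Gan--Takeda compatibility of the transfer on the automorphic side) and reconciles the normalizing twists $|\det|^{-3/2}$ and $|c|^{-3/2}$, using that the standard embedding is the identity on scalars. Property (iii) is the computation of Hodge--Tate weights from the exponents of $\phi_{(w;a,b)}$ on $\C^\times$, which after the normalizing shift produces $\{\delta,\delta+b,\delta+a,\delta+a+b\}$ with $\delta=\tfrac12(w+3-a-b)$; (iv) follows from (ii) at $\ell=p$ together with the crystalline comparison. Property (v), purity, follows from temperedness of $\Pi$ (Shin, Caraiani in the cuspidal case; classically in the Yoshida case) — equivalently from Weissauer's proof of the Ramanujan bound for generic $\GSp_4$ — together with the weight--monodromy input already built into the $\GL_4$ compatibility at the bad primes. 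Finally (vi) follows by comparing the similitude character $\chi_\p^{-1}$ with $\mathrm{rec}_{GT}$ of the central character of $\pi_\ell$ at the finite places and with the matrix $\phi_{(w;a,b)}(j)$ at $\infty$: its displayed shape, together with $w+1\equiv a+b\pmod 2$, forces $c\circ\rpp(c_\infty)=-1$, while the finite-place comparison gives $c\circ\rpp=\omega_{\pi^\circ}^{-1}\,\nu_p^{-a-b}$.

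The genuine obstacle is the full local--global compatibility at the bad primes of $S$ together with the descent from $\GL_4$ to $\GSp_4$: one must realise $\rho_{\Pi,\p}$ as $\GSp_4$-valued compatibly with the monodromy operators and with $\mathrm{rec}_{GT}$, and cope both with possible reducibility of $\rho_{\Pi,\p}$ (which obstructs uniqueness of the alternating form) and with the non-cuspidal Yoshida case. This is precisely where the cited work of Sorensen and of Mok — resting on Arthur's endoscopic classification for $\GSp_4$ and on base change — does the real work; the remaining ingredients (purity from Weissauer, the $\GL_n$ local--global compatibility, and the $p$-adic Hodge theory of the cohomological construction, with the $p$-adic aspects also treated by Urban) are by now standard.
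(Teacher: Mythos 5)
Your plan is correct and coincides in substance with the argument of the sources the paper cites for this theorem (Sorensen \cite{Sor} and Mok \cite{Mok}); the paper itself gives no proof beyond the citation. Building the compatible system cohomologically, transferring $\pi$ to an isobaric $\Pi$ on $\GL_4$ via Arthur's endoscopic classification, importing the $\GL_n$ local--global compatibility package of Harris--Taylor, Shin, Caraiani, et al., descending to $\GSp_4$ through the symplectic essential self-duality, and then reading off properties (ii)--(vi) is precisely how those authors proceed, including the difficulty you flag about producing the $\GSp_4$-structure when $\rho_{\Pi,\p}$ is reducible and the separate treatment of the Yoshida (endoscopic) case.
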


\begin{proof}
\cite[\S 1]{Sor} and \cite[Theorem 3.1]{Mok}.
\end{proof}

\begin{remark}
It is not necessary to assume that $\pi$ is globally generic in the above theorem. The weaker assumption that $\pi$ belongs to a global generic Arthur parameter will suffice \cite[\S 3.2]{Mok}. Hence it is enough to assume that the local $L$-packet containing $\pi_v$ contains a generic element for \textit{all} places $v$. In \cite{Sor}, global genericity of $\pi$ is used to get a strong lift $\pi \rightsquigarrow \Pi$ from $\GSp_4$ to $\GL_4$ compatible with the local Langlands correspondences for the two groups at all places. In \cite{Mok}, Mok relaxes this assumption on $\pi$ using Arthur's results.
\end{remark}

As remarked earlier, $\rho_{\pi, \p}$ takes values in $E_{\p}$. Let $\O$ be the valuation ring of $E_{\p}$, and $k_{\p}$ its residue field. By choosing a
Galois invariant $\O$-lattice $\Lambda$  in the representation space of $\rpp$ and reducing mod $\p$ we get a representation 
\begin{equation}
\rbpp : G_{\Q, S\cup\{p\}} \rightarrow \GSp_4(k_{\p}) \label{red}.
\end{equation}
 The reduction $\rbpp$ could depend on the choice of the lattice $\Lambda$ but its semi-simplification is independent of it. In particular, such is the case for
$\rbpp$ irreducible.  It is known ( \cite{BLGGT} in the regular weight case and \cite{PSW} more generally) that $\rbpp$ is irreducible for
at least a density one set of primes, assuming that $\pi$ is non-endoscopic and non-CAP.

\section{Deformation theory of Galois representations}

We briefly recall the deformation theory of Galois representations as developed in \cite{Tilouine}.
Consider a residual Galois representation
$$\rhob : \GQSp \to \GSp_4(k);$$
here $k$ is a finite field of characteristic $p$ and $S$ is a finite set of primes.  We assume that $\rhob$ is absolutely
irreducible.  Let $\Ca$ denote the category of inverse limits of artinian local rings with residue field $k$.  For $A \in \Ca$, a {\it lifting}
of $\rhob$ to $A$ is a homomorphism $$\rho : \GQSp \to \GSp_4(A)$$ which yields $\rhob$ on composition with the reduction map $A \to k$.
Two such liftings are considered to be equivalent if one can be conjugated to the other by an element of $\ker(\GSp_4(A) \rightarrow \GSp_4(k))$.  A {\it deformation} of $\rhob$ is then an equivalence class of liftings.  This defines a functor
$$D_{\rhob} : \Ca \to \Sets$$
sending a ring $A \in \Ca$ to the set of deformations of $\rhob$ to $A$.

The functor $D_{\rhob}$ is known to be representable: there is a ring $\Ru \in \Ca$ and a deformation $\rhou \in D_{\rhob}(\Ru)$
identifying $D_{\rhob}$ with the functor $\Hom_{\Ca}(\Ru,-)$: the map
$$\Hom_{\Ca}(\Ru,A) \to D_{\rhob}(A)$$
sending $f : \Ru \to A$ to $f \circ \rhou$ is a bijection for any $A \in \Ca$.

The universal deformation ring $\Ru$ has a non-canonical presentation
$$\Ru \cong W(k)[[T_1,\ldots,T_{d_1}]] / (f_1,\ldots,f_{d_2})$$
where $W(k)$ denotes the Witt vectors of $k$ and
$$d_i := \dim_k H^1(\GQSp, \ad \rhob).$$
Here $\ad \rhob$ denotes the Lie algebra $\gsp_4(k)$ endowed with a $\GQSp$ action via $\rhob$ and conjugation.
Furthermore, from the global Euler characteristic formula  
$$ \frac{|H^1(\GQSp, \ad\rhob )|}{|H^0(\GQSp, \ad\rhob )|\cdot |H^2(\GQSp, \ad\rhob )|} = \frac{|\ad\rhob|}{|(\ad \rhob)^{c = 1}|},$$
a quick calculation (and an application of Schur's lemma) shows that, for $\rhob$ absolutely irreducible
$$d_1 - d_2 = \begin{cases} 1 & \rhob \text{~even} \\
5 & \rhob \text{~odd I} \\
7 & \rhob \text{~odd II}
\end{cases}$$
where $\rhob$ is said to be {\it even} (resp.\ {\it odd I}, resp.\ {\it odd II}) if the image of (any) complex conjugation $c$ is
scalar (resp.\ conjugate to $\diag(1, -1, -1, 1)$, resp.\ conjugate to $\diag(1, -1, 1,-1)$).

We say that the deformation theory of $\rhob$ is {\it unobstructed} if $d_2 = 0$, in which case
$\Ru$ is isomorphic to a power series ring in $d_1$ variables over $W(k)$, with $d_1$ equal to $1,5,7$ as above. 
(We remark that automorphic $\rhob$ such as the ones we are considering fall under the odd II case above by Theorem \ref{mainthm} part vi.)

Using the self-duality of $\ad \rhob$, a  portion of the Poitou--Tate exact sequence yields an exact sequence
\begin{multline*}
0 \to \ker \Bigl[ H^1\bigl (\GQSp,\ad \rhob(1) \bigr) \to \underset{\ell \in S \cup\{p\}}{\bigoplus} H^1\bigl(\Gl, \ad\rhob(1) \bigr) \Bigr]^{\vee} \to \\
H^2(\GQSp,\ad \rhob) \to \ker \Bigl[\underset{\ell \in S \cup \{p\}}{\bigoplus} H^0\bigl(\Gl,\ad \rhob (1) \bigr)^{\vee} \to H^{0}\bigl(\GQSp,\ad \rhob (1) \bigr) \Bigr] \to 0.
\end{multline*}
We define the {\it local obstruction space}
$$\Hl(\GQSp,\ad \rhob) := \ker \Bigl[ \underset{\ell \in S \cup \{p\}}{\bigoplus} H^0\bigl(\Gl,\ad \rhob (1) \bigr)^{\vee} \to H^{0}\bigl(\GQSp,\ad \rhob (1)) \Bigr]$$
and the {\it global obstruction space}
$$\Hg(\GQSp,\ad \rhob) := \ker \Bigl[ H^1\bigl (\GQSp,\ad \rhob(1) \bigr) \to \underset{\ell \in S \cup \{p\}}{\bigoplus} H^1\bigl(\Gl, \ad\rhob(1) \bigr) \Bigr]^{\vee}.$$
Note that these definitions make sense without any irreducibility assumption on $\rhob$.

The global obstruction space is (dual to) a Selmer group for $\ad \rhob(1)$ with strict local conditions at all ramified primes (including the coefficient primes), and 
thus is controlled by any result controlling such Selmer groups \cite{GeT}. Our focus in this paper is on the local obstruction space
$\Hl(\GQSp, \ad \rhob)$.

\section{Unobstructedness for Modular Forms}
We quickly review the proof of the analogue of our theorem for an automorphic representation $\pi$ of $\GL(2)$ corresponding to a classical modular form $f$, as in \cite{Weston}.  As with $\GSp(4)$, the local portion of the proof quickly reduces to showing that for any fixed prime $\ell$, the twisted adjoint invariants $H^0(G_\ell,\ad \rbpp (1))$ vanish for all but finitely many coefficient primes $\p$; and that $H^0(G_p,\ad \rbpp(1))$ vanishes for all but finitely many $\p$.  (Here $p$ is the residue characteristic of $\p$.)  The latter case is dealt with using any description of $\rbpp|_{G_p}$ coming from an integral version of $p$-adic Hodge theory.  For the former case, since we are varying the coefficient prime, the first essential input is a strong local--global compatibility result which allows one to describe $\rbppl$ in terms of the type of $\pi_\ell$.  The principal series and supercuspidal cases can then be proved using purely local methods.  By contrast, we know of no purely local approach in the Steinberg case.  Instead, we are forced to rely on global level lowering results to get a precise enough description of $\rbppl$.

The proof for $\GSp(4)$ follows essentially the same pattern. The key assumption is that $\pi$ is everywhere locally generic, which is needed for the Steinberg case. 

We remark that to prove the vanishing of $H^2_{\mathrm {glo}}$, by \cite[Proposition 2.2]{Weston} it suffices to show the finiteness of the Selmer group $H^1_f (\GQSp, \ad\rpp (1))$ and the vanishing of the Selmer group $H^1_f(\GQSp, \ad \rpp \otimes_{\mathcal O} E_\p/\mathcal O)$.  Such results can in some cases perhaps be deduced from results in Genestier-Tilouine \cite{GeT}.

\section{Non-supercuspidal case}

\subsection{Statements}

  Our goal in this section is to prove the following result.

\begin{prop} \label{prop:nsc}
Let $\pi$ be as in the introduction.  Let $\ell$ be a prime for which the local component $\pi_\ell$ is not supercuspidal.  Then
$$H^{0}\bigl(\Gl,\arb(1) \bigr) = 0$$
for all but finitely many primes $\p$ of $\Kp$.
\label{nss}
\end{prop}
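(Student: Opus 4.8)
The plan is to exploit full local--global compatibility (Theorem \ref{mainthm}, part ii) together with a case-by-case analysis of the possible types of $\pi_\ell$, following the pattern already sketched in Section 4 for $\GL(2)$. First I would fix the prime $\ell$ and observe that since $\pi$ is everywhere locally generic, the Gan--Takeda parametrization $\mathrm{rec}_{GT}(\pi_\ell \otimes |c|^{-3/2})$ gives us a concrete Weil--Deligne representation $WD(\rppl)^{Fr\text{-}ss}$ over $E_\p$ whose \emph{shape} (the partition into Frobenius-semisimple summands, the monodromy operator $N$, and the characters appearing) is independent of $\p$ --- only the \emph{values} of the characters at Frobenius vary $\p$-adically through a fixed compatible system of Hecke eigenvalues. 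Non-supercuspidality of $\pi_\ell$ means the $L$-parameter factors through a proper Levi, so $\rppl$ is built out of characters and (twists of) $2$-dimensional representations attached to $\GL(2)$-type pieces (Klingen/Siegel parabolic inductions, Steinberg $\times$ character, Saito--Kurokawa type degenerations, etc.). I would enumerate these cases using the Gan--Takeda tables.

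Next, for the residual representation: $\rbppl^{\ssi}$ is the reduction mod $\p$ of (the semisimplification of) this Weil--Deligne picture, so $\ad\rbpp(1)|_{G_\ell}$ decomposes --- after semisimplification --- into tensor products of the characters appearing in $\rbppl$, their inverses, and the cyclotomic twist $\cycbp$. Concretely, $H^0(G_\ell, \ad\rbpp(1))$ is nonzero only if some ratio $\bar\chi_i \bar\chi_j^{-1} \cycbp$ (where the $\chi_i$ are the characters on the diagonal blocks, possibly including those inside a $2$-dimensional Steinberg piece, and including the similitude-character contributions) is trivial on $G_\ell$, i.e. equals the trivial character. Each such coincidence is a congruence condition on the Hecke/Satake data at $\ell$ modulo $\p$: for instance $\bar\chi_i(\Frob_\ell) \bar\chi_j(\Frob_\ell)^{-1} \equiv \ell \pmod{\p}$, or a condition on tame inertia exponents. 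Since the $\chi_i(\Frob_\ell)$ and the ramified characters' finite-order values and the quantity $\ell$ all live in a fixed number field (the coefficient field $E$, enlarged to contain roots of unity of the relevant orders), and since in the regular-weight case $a>b>0$ the relevant algebraic numbers $\chi_i(\Frob_\ell)\chi_j(\Frob_\ell)^{-1} - \ell$ (etc.) are \emph{nonzero} --- this is exactly where purity of weight $\kappa = w+3$ and the genericity of the parameter are used to rule out the "bad" equalities holding in characteristic $0$ --- each such element is divisible by only finitely many $\p$. Taking the union over the finitely many pairs $(i,j)$ and the finitely many cases gives the result. Where the parameter has a monodromy operator $N\neq 0$ (the Steinberg-type subcases), I also need to check that $N$ reduces to a nonzero operator mod $\p$ for all but finitely many $\p$, which again is a finiteness-of-denominators statement; alternatively, as the Section 4 discussion notes, one may need to invoke the global level-lowering input of Section 6 to pin down $\rbppl$ precisely in the genuinely Steinberg subcase, rather than a purely local argument.

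The main obstacle is the Steinberg / Saito--Kurokawa-adjacent subcases, precisely as flagged in Section 4: there the local parameter alone does not determine $\rbppl$ (the reduction could a priori be more degenerate than the generic one, e.g. $N$ could vanish mod $\p$ and the two "Steinberg" characters could merge), and the naive local congruence count does not immediately close. Handling this cleanly requires the level-lowering result of Section 6 to guarantee that for all but finitely many $\p$ the residual representation $\rbppl$ still "sees" the Steinberg structure, after which the same character-ratio bookkeeping applies. A secondary technical point, invisible in the $\GL(2)$ case, is keeping careful track of the similitude character: $\ad$ is the trace-zero part of $\rbpp \otimes \rbpp^\vee$, but the similitude twist means one must be slightly careful that the "$\cycbp$" appearing in $\ad\rbpp(1)$ combines correctly with $c\circ\rbpp = \bar\omega_{\pi^\circ}^{-1}\cycbp^{-a-b}$ from Theorem \ref{mainthm}(vi); this only contributes finitely many further excluded $\p$ and does not affect the structure of the argument.
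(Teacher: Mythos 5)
Your proposal correctly identifies the overall architecture of the paper's argument: use full local--global compatibility to describe $WD(\rppl)$ via the classification of non-supercuspidal types, then run a case-by-case analysis showing that a nontrivial $G_\ell$-invariant in $\arb(1)$ would force a cyclotomic coincidence among the characters appearing, which for fixed $\ell$ can hold for only finitely many $\p$. This matches the paper's strategy (which uses the Roberts--Schmidt tables rather than Gan--Takeda directly, but these classify the same objects).

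There is, however, one place where your argument as written would genuinely fail, and it is the crux of the whole section. You suggest that $N \not\equiv 0 \pmod\p$ is a ``finiteness-of-denominators statement,'' with the global level-lowering being an alternative route. This is not right: the obstruction is not that $N$ has denominators, it is that the Galois-stable $\O$-lattice in $\rpp$ is not determined locally at $\ell$, so $\rbppl$ (as opposed to $\rbppls$) is simply not a well-defined local object, and could a priori be fully split. No amount of denominator bookkeeping at $\ell$ can repair this. Worse, your fallback --- ``character-ratio bookkeeping on the semisimplification'' --- cannot close either: in the twisted-Steinberg groups (IV, and similarly III, V, VI), the semisimplification of $\arb(1)|_{G_\ell}$ \emph{always} contains the trivial character, so its $H^0$ is always nonzero. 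The vanishing is seen only from the non-split extension structure, which is what the paper's notion of a \emph{generic} reduction (and the generalized Steinberg modules $\St_n(\tp)$ and the identities $H^0(G_\ell,\St_n(\tp)\otimes\chi) = H^0(G_\ell,\kp(\chi\cycp^{m}))$) is built to capture, and which is established for all but finitely many $\p$ precisely by the level-lowering Theorem~\ref{thm:ll} applied via Proposition~\ref{prop:red}. So the global input is not optional; it is the indispensable step that pins down $\rbppl$ beyond its semisimplification, and the subsequent computation must be carried out using the resulting extension structure, not just the semisimple character data.
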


We follow the classification
of \cite[Section 2.4]{RobertsSchmidt}, which gives explicit descriptions of the Weil--Deligne representations corresponding to
non-supercuspidal representations of $\GSp_4(\Ql)$.  These in turn allow us to compute $\rppl$ for all $\p \nmid \ell$.  However, even
though $\rbppl$ is a well-defined object when the global residual representation $\rbpp$ is irreducible, passing directly from
$\rppl$ to $\rbppl$ is more ambiguous.  Instead, all that is initially clear is that the semisimplification $\rbppls$ is the reduction of $\rppls$. 
In order to move beyond this, we invoke the following global result on level lowering.

\begin{thm} \label{thm:ll}
Let $\pi$ be as above.  Fix a prime $\p$ of residue characteristic $p \neq \ell$ such that the residual representation $\rbpp$ is irreducible.  Assume further that $p > a+b+3$
and that $\pi$ is unramified at $p$.  Then there exists a globally generic
automorphic representation $\pi'$ of $\GSp(4)$ (essentially discrete series with the same weights as $\pi$)
 and a prime ideal $\p'$ of residue characteristic $p$ of the field of coefficients $K'$ of $\pi'$ such that
\begin{enumerate}
\item The paramodular level of $\pi'$ is equal to the Artin conductor of $\rbpp$;
\item The residual representations $\rbpp$ and $\rhob_{\pi',\p'}$ are isomorphic over $\Fpbar$.
\end{enumerate}
\end{thm}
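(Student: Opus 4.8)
The plan is to transport the level-lowering problem from $\GSp_4$ to $\GL_4$, where the machinery of Arthur's classification, potential automorphy, and the level-lowering theorems of Barnet-Lamb--Gee--Geraghty--Taylor are available, and then descend back. First I would use global genericity of $\pi$ to invoke the strong lift $\pi \rightsquigarrow \Pi$ of \cite{Sor}, a cuspidal (conjugate) self-dual automorphic representation of $\GL_4(\A_\Q)$ which is compatible with the local Langlands correspondences at every place; in particular $\rho_{\Pi,\p} \cong \rpp$ as $4$-dimensional Galois representations, so $\bar\rho_{\Pi,\p} \cong \rbpp$. The hypotheses carry over: $\Pi$ is regular algebraic (from $a>b>0$), essentially (conjugate) self-dual, $\Pi_p$ is unramified, and $p > a+b+3$ guarantees the residual representation is in the Fontaine--Laffaille range at $p$ while also forcing the relevant adjoint $H^0$ and adequacy bookkeeping to behave. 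The assumption that $\rbpp$ is irreducible, together with oddness from Theorem \ref{mainthm}(vi) and the adequacy hypothesis on $\rbpp(G_{\Q(\zeta_p)})$ imported from Theorem 1.1, puts us in a setting where level-lowering/raising congruences for $\GL_4$ apply.

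Next I would apply the automorphic level-lowering result for $\GL_n$ (the "minimal level" theorem of \cite{BLGGT}, in the style of their Theorem 4.3.1 / the arguments around potential automorphy and Khare--Wintenberger lifting): there exists a RAESDC automorphic representation $\Pi'$ of $\GL_4(\A_\Q)$ with $\bar\rho_{\Pi',\p'} \cong \rbpp$ over $\Fpbar$, with the same infinitesimal character as $\Pi$, unramified at $p$, and whose conductor is exactly the Artin conductor of $\rbpp$ — i.e. $\Pi'$ is "new of minimal level." One must check that $\Pi'$ can be chosen so that its local components at the bad primes are of the type that descends to $\GSp_4$: this is where the everywhere-generic hypothesis and the explicit Weil--Deligne dictionary of \cite{RobertsSchmidt} are used, to guarantee that the minimal-level $\GL_4$-representation is the transfer of a generic representation of $\GSp_4$ with the predicted (paramodular) conductor, and that the similitude/self-duality structure is the symplectic one rather than the orthogonal one (the multiplier character $c\circ\rpp$ is determined by the central character and the cyclotomic twist, so the $\GSp_4$ versus $\GSO$ dichotomy is pinned down by the residual data).

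Then I would descend $\Pi'$ back to $\GSp_4$ using Arthur's classification / the results of \cite{GT} and \cite{Mok}: since $\Pi'$ is self-dual of symplectic type with the correct multiplier and is generic at every place, it is the functorial lift of a globally generic cuspidal $\pi'$ on $\GSp_4(\A_\Q)$, essentially discrete series at infinity with Harish-Chandra parameter $(a,b)$ (the archimedean parameter $\phi_{(w;a,b)}$ is recovered from the Hodge--Tate weights of $\rho_{\Pi',\p'}$, which agree with those of $\rpp$). The paramodular level of $\pi'$ equals the conductor of $\Pi'$, which by construction equals the Artin conductor of $\rbpp$, giving (1); and $\rhob_{\pi',\p'} \cong \bar\rho_{\Pi',\p'} \cong \rbpp$ gives (2). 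The prime $\p'$ of the coefficient field $K'$ of $\pi'$ is the one induced by $\p'$ on $\GL_4$ under the compatible lift.

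The main obstacle I expect is twofold. First, ensuring the \emph{compatibility of local types} under the round trip: level lowering on $\GL_4$ a priori only controls the conductor and the residual representation, and one must rule out that the minimal-level $\Pi'$ has local components (e.g. at ramified primes, or the subtle distinction between a twist of Steinberg and an unramified twist whose reduction looks Steinberg) that fail to come from a generic $\GSp_4$-representation of the \emph{paramodular} conductor predicted by $\rbpp$ — this is exactly the point where one leans on genericity everywhere and on the conductor formulas in \cite{RobertsSchmidt}, and where the "not supercuspidal at $2$" hypothesis of Theorem 1.1 is ultimately used. Second, verifying that the adequacy/big-image hypothesis survives the transfer and is strong enough to feed the $\GL_4$ level-lowering input (one needs $\rbpp(G_{\Q(\zeta_p)})$ adequate, not merely irreducible, in the range $p \le 11$), and checking that the self-duality type is genuinely symplectic so that the descent lands on $\GSp_4$ rather than on an even orthogonal group.
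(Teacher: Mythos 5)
Your proposal takes a genuinely different route from the paper's proof. The paper's proof is a one-line citation: it invokes the ``optimal lifts'' theorem \cite[Thm 7.6.6]{GG}, which is a direct $\GSp_4$ result (a Khare--Wintenberger/Taylor--Wiles--Kisin argument carried out in the symplectic setting itself, producing a congruent automorphic lift of minimal level), together with two remarks: the ordinary hypothesis there can be dropped because $p>a+b+3$ puts $\rbpp|_{G_p}$ in the Fontaine--Laffaille range, and the big-image hypothesis can be relaxed to adequacy of $\rbpp(G_{\Q(\zeta_p)})$ via \cite[Prop.\ 5.3.2]{BLGGT}. There is no transfer to $\GL_4$ and back.

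Your route --- lift $\pi\rightsquigarrow\Pi$ via \cite{Sor}/\cite{Mok}, level-lower on $\GL_4$, and descend through Arthur --- is attractive, but as written it rests on a result that \cite{BLGGT} does not contain. The BLGGT machinery gives potential automorphy, change-of-weight, and congruence results, not a Ribet/Khare--Wintenberger-style statement ``there exists a RAESDC $\Pi'$ with conductor exactly the Artin conductor of $\rbpp$.'' The citation to ``their Theorem 4.3.1 / minimal level'' is the load-bearing step and it is not there; the exact-conductor optimal-lift statement you need is precisely the content of a dedicated result like \cite[Thm 7.6.6]{GG}, so your argument is essentially circular at this point unless you supply an independent $\GL_4$ optimal-lift theorem. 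A second, more structural issue is that $\GL_4$ level lowering controls only the conductor and the residual representation; it does not by itself force the local components of $\Pi'$ at the bad primes to be the functorial lifts of \emph{generic} $\GSp_4$ representations of the predicted paramodular conductor (there are non-generic members of local packets with different conductors, and twists of Steinberg versus principal series that reduce the same way). You flag this yourself as an obstacle, but it is not a side issue --- it is exactly the reason the paper works directly with an optimal-lift theorem for the group $\GSp_4$ rather than transferring. Your remarks about adequacy and about the symplectic (as opposed to orthogonal) self-duality type being pinned down by the similitude character are correct and in the spirit of the paper's remarks, and the backward descent via Arthur is the right idea if the $\GL_4$ input could be secured, but the proof as proposed has a real gap at the level-lowering step.
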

\begin{proof}
This is essentially \cite[Thm 7.6.6]{GG} on the existence of optimal lifts.  Note that the ordinary hypothesis is unnecessary under the assumption that
$p > a+b+3$, so that $\rbpp$ is in the Fontaine--Lafaille range. The needed assumptions on the image of $\rbpp$ can be relaxed to adequacy of $\rbpp(G_{\Q(\zeta_p)})$, which follows from \cite[Proposition 5.3.2]{BLGGT}.
(Note that the proof does guarantee that the resulting lifts are unramified outside the
specified set, although this was left out of the statement of the theorem.)
\end{proof}

\subsection{Reduction}

Let $V$ be a four-dimensional representation of $\Gl$ over $\Fpbar$.
We say that $V$ is {\it generic} if there exists  a Jordan--Holder series
$$0 =V_0 \subsetneq V_1 \subsetneq \cdots \subsetneq V_d \subsetneq V$$
with the property that for any $i$ such that there exists $j$ with
\begin{equation} \label{eq:nt}
V_i/V_{i-1} \cong V_j/V_{j-1}(1),
\end{equation}
 then in fact 
$$V_i/V_{i-1} \cong V_{i+1}/V_{i}(1)$$
and $V_{i+1}/V_{i-1}$ represents a non-trivial element in
$$\Ext^{1}_{\Fpbar[\Il]} \left( V_{i+1}/V_i, V_{i}/V_{i-1} \right).$$

We note that this definition may not be optimal for groups other than $\GSp(4)$, but it will suffice for our purposes.  It is likely
that the notion we give here is closely related to that discussed in \cite[Definition 1.1.2]{Allen}, but we have not pursued the connection here.

\begin{prop} \label{prop:red}
Let $\pi$ be as above and let $\ell$ be a prime such that $\pi_\ell$ is not supercuspidal.  Then $\rbppl$ is generic for all but finitely many $\p$.
\end{prop}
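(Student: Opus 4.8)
The plan is to reduce the statement to a local analysis of $\rbppl$ case by case according to the Roberts–Schmidt classification of non-supercuspidal $\pi_\ell$. First I would fix a prime $\p$ of residue characteristic $p \neq \ell$ such that $\rbpp$ is irreducible, $p > a+b+3$, $\pi$ is unramified at $p$, and (discarding finitely many further $\p$) such that the requisite adequacy/image hypotheses of Theorem~\ref{thm:ll} hold; all but finitely many $\p$ satisfy these constraints. Applying Theorem~\ref{thm:ll}, I replace $\pi$ by a level-lowered $\pi'$ whose paramodular level equals the Artin conductor of $\rbpp$ and whose mod-$\p'$ representation is isomorphic to $\rbpp$ over $\Fpbar$. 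The point of this replacement is that, by full local--global compatibility (Theorem~\ref{mainthm}(ii)), the Weil--Deligne representation of $\rho_{\pi',\p'}|_{G_\ell}$ is $\mathrm{rec}_{GT}(\pi'_\ell \otimes |c|^{-3/2})$ up to Frobenius semisimplification, and the minimality of the level forces $\pi'_\ell$ to be essentially a twist of an unramified or tamely ramified representation of a very restricted type — the inertia action on $\rho_{\pi',\p'}|_{G_\ell}$ matches the inertia action predicted by $\rbpp$ having Artin conductor equal to the paramodular level.

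Next I would go through the Roberts–Schmidt types for $\pi'_\ell$ (groups I through XI in \cite[Section 2.4]{RobertsSchmidt}): for each type the associated Weil--Deligne representation is built out of characters $\chi_{i,\p'}$ and copies of the Steinberg (monodromy) block, and the reduction mod $\p'$ is therefore a successive extension whose graded pieces are reductions of characters of $\Gl$. For the principal-series-type blocks with trivial monodromy, the graded pieces are unramified twists of reductions of characters, and one checks that for all but finitely many $\p'$ no nontrivial Tate-twist relation \eqref{eq:nt} holds among them — this is the same finiteness-of-exceptional-primes argument as in the $\GL(2)$ case of \cite{Weston}, since a relation $V_i/V_{i-1} \cong V_j/V_{j-1}(1)$ forces the corresponding ratio of character values at a fixed Frobenius (or the ratio to $\cycbp$) to collapse, which happens for only finitely many residue characteristics given the Weil-number bounds in Theorem~\ref{mainthm}(v). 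For the blocks that genuinely carry monodromy (Steinberg-type: types IVa, Va, VIa, the St on $\GL(2)$-part, etc.), the Weil--Deligne representation contains a two-dimensional piece of the form $\chi \oplus \chi(1)$ together with a nonzero monodromy operator $N$ connecting them; here the reduction theory of Fontaine–Laffaille or simply the standard reduction of a Steinberg block shows that after reduction the corresponding $V_{i+1}/V_{i-1}$ is a \emph{nonsplit} extension of $V_{i+1}/V_i$ by $V_i/V_{i-1} \cong V_{i+1}/V_i(1)$ in $\Ext^1_{\Fpbar[\Il]}$, precisely because $N$ does not vanish mod $\p'$ (the monodromy of Steinberg reduces to the monodromy of the mod-$p$ Steinberg for $p$ large, which is the content of the genericity of the $\GL(2)$ Steinberg used in \cite{Weston}). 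Thus the Jordan–Hölder filtration coming from the Weil--Deligne data witnesses genericity of $\rbppl = \rbpp|_{G_\ell} \cong \rhob_{\pi',\p'}|_{G_\ell}$.

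Finally I would assemble the pieces: the filtration of $\rbppl$ is a concatenation of the filtrations of its $\GL(2)\times\GL(2)$- or $\GL(2)$-type constituents, and I need that the genericity condition — which is about pairs of graded pieces related by a Tate twist — is not destroyed when constituents are glued together. This requires checking that no ``accidental'' Tate-twist coincidence occurs \emph{between} graded pieces living in different constituents; as before this is a finiteness statement in $\p$, controlled by comparing Frobenius eigenvalues, which have prescribed complex absolute values by purity (Theorem~\ref{mainthm}(v)) and hence can coincide modulo $\p$ only for finitely many $\p$. The main obstacle I anticipate is the bookkeeping in the Steinberg-type cases where a genuine monodromy block sits adjacent to a possibly-twist-related ordinary piece: there one must verify both that the internal extension of the Steinberg block stays nonsplit after reduction (so condition \eqref{eq:nt} is satisfied on the nose within the block) \emph{and} that this block is not forced into an unwanted twist relation with a neighboring constituent — i.e.\ that the chosen Jordan–Hölder ordering can always be arranged so that whenever \eqref{eq:nt} holds for some $i$, the twist partner is the immediately following graded piece. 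That this ordering can be arranged, uniformly for all but finitely many $\p$, is where the definition of genericity is tailored to $\GSp(4)$ and where the argument is most delicate; everything else is a matter of invoking the modular-forms case \cite{Weston} constituent by constituent.
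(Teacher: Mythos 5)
The proposal misassembles the level-lowering input and as a result has a genuine gap at precisely the point that matters. You apply Theorem~\ref{thm:ll} unconditionally as a preprocessing step, replacing $\pi$ by a $\pi'$ whose paramodular level equals the Artin conductor of $\rbpp$, and then claim that the conductor match ``forces'' the Steinberg monodromy blocks to remain nonsplit upon reduction. But that conductor match is true by construction, regardless of whether $\rbppl$ is generic. Concretely: if for some $\p$ all three successive extensions in a Group~IV residual representation happen to split, the Artin conductor of $\rbppl$ is just~$1$, and Theorem~\ref{thm:ll} hands you a $\pi'$ unramified at $\ell$; local-global compatibility for $\pi'$ then says $\rho_{\pi',\p'}|_{G_\ell}$ has no monodromy at all, so $\rhob_{\pi',\p'}|_{G_\ell} \cong \rbppl$ is certainly not generic, and nothing in your argument detects this. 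Your assertion that ``the monodromy of Steinberg reduces to the monodromy of the mod-$p$ Steinberg for $p$ large'' and that this is ``the content of the genericity of the $\GL(2)$ Steinberg used in \cite{Weston}'' is not correct: there is no purely local proof of that claim, and it is not what \cite{Weston} proves. In the $\GL(2)$ Steinberg case of \cite{Weston}, nonsplitting of the reduction is established by exactly the global counting argument that your write-up omits.

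The actual proof uses level-lowering \emph{contrapositively} as a detector of exceptional primes, not as a reduction step. One shows that the exponent of $\ell$ in the Artin conductor of $\rbppl$ records how many of the relevant extensions have stayed inertially nontrivial; if that exponent is smaller than the conductor of $\pi_\ell$ itself, Theorem~\ref{thm:ll} produces a globally generic $\pi'$ of \emph{strictly smaller} level congruent to $\rbpp$; there are only finitely many essentially discrete series automorphic representations of each smaller level, and each can be congruent to $\pi$ at only finitely many $\p$; hence the exceptional set is finite. That counting is the entire content of the finiteness in the Steinberg-type groups, and it is absent from your proposal; without it you cannot conclude anything for the exceptional $\p$ at which an extension does split. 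The Tate-twist/Weil-number coincidence argument you invoke is genuine, but it only handles the \emph{other} finiteness — showing that for groups where \eqref{eq:nt} should \emph{not} occur, it occurs for only finitely many $\p$ — and does not address nonsplitness of the Steinberg reductions.
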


One could (but probably shouldn't) paraphrase the above result as saying that globally generic automorphic
represntations give rise to generically residually generic Galois representations.

\begin{proof}
Unfortunately, we do not presently have a conceptual proof of this proposition.  Instead we are forced to resort to a case study using \cite{RobertsSchmidt}.  The result is
vacuous for Groups I, VII, VIII and X as (\ref{eq:nt}) never occurs.  Of the remaining cases, Groups II, IV, V and VI are quite similar.
We will thus content ourselves with a discussion of Groups III, IV, IX and XI.  Since we are always content to eliminate finitely many $\p$, we can assume
we are in a situation  where Theorem~\ref{thm:ll} applies.

In Group IV, we have
$$\rbppls \cong \sp\cycp^{3/2} \oplus \sp\cycp^{1/2} \oplus \sp\cycp^{-1/2} \oplus \sp\cycp^{-3/2}$$
for some character $\sigma : \Ql^{\times} \to \C^{\times}$.
We thus must show that each of the succesive extensions above is intertially non-trivial for all but finitely many $\p$.  
Since the property of residual genericity is invariant under twisting by a character,
choosing a Dirichlet character $\tilde{\sigma}$ extending $\sigma$ and replacing $\pi$ by $\tilde{\sigma}^{-1} \otimes \pi$
we may assume that $\sigma$ is trivial.
In this case we have that the exponent of $\ell$ in the Artin conductor of $\rbppl$ is precisely the number of these extensions which
are inertially non-trivial.  In particular, if it is not $\ell^3$, then by Theorem~\ref{thm:ll} there exists a congruence mod $\p$
between $\pi$ and some other automorphic representation of smaller level.  Since there are only finitely many essentially discrete series automorphic
representations of smaller level and each can only be congruent to $\pi$ modulo finitely many $\p$, it follows that
the Artin conductor of $\rbppl$ is $\ell^3$ for all but finitely many $\p$, as desired.

For Group III, as above we may twist away the character $\sigma$ to assume that
$$\rbppls \cong \cp\cycp^{1/2} \oplus \cp\cycp^{-1/2} \oplus \cycp^{1/2} \oplus \cycp^{-1/2}$$
for some character $\chi : \Ql^{\times} \to \C^{\times}$.  However, in this case the only way for
$\rbppl$ to have symplectic image is if the two subextensions above are either simultaneously split or non-split.
Using this one argues as in the Group IV case with little additional change.

For Group IX, we have that
$$\rbppls \cong \xi_\p \cycp^{1/2} \det \mu_\p \mu_\p' \oplus \cycp^{-1/2} \mu_\p$$
for an irreducible two-dimensional $\mu : \Gl \to\GL_2(\Fpbar)$ and a non-trivial quadratic character $\xi_p$ such that
$\xi_p \mu_p = \mu_p$.  However, $\rppl$ is non-semisimple with Artin conductor $\ell^{2N+1}$, with $\ell^N$ the
Artin conductor of $\mu$.  In particular, when $\rbppl$ has Artin conductor $\ell^{2N}$, it follows from level lowering
as above that there exists an automorphic representaion of smaller level congruent to $\pi$ modulo $\p$.  As before, this
can occur only for finitely many $\p$.

In Group XI we may again twist away the character $\sigma$ in order to assume that
$$\rbppls \cong \cycp^{1/2} \oplus \mu \oplus \cycp^{-1/2}$$
with $\mu : \Gl \to \GL_2(\Fpbar)$ the reduction of an irreducible representation.  Eliminating finitely many $\p$, we may assume
that $\mu$ is itself irreducible.  It follows that the Artin conductor of $\rbppls$ is equal to the Artin conductor of $\mu$, plus one
if the extension of the characters is non-trivial.  From here the argument proceeds as before.
\end{proof}

\begin{remark}
The finitely many $\p$'s that needs to be discarded in the above proposition include congruence primes of $\pi$, and primes $\p$ for which the (paramodular) Hecke eignevalues of a ramified component $\pi_v$ of $\pi$ satisfy certain congruences mod $\p$. For $\pi$ which has Iwahori level, this exceptional set of primes will be made precise in \cite{Mullath}.
\end{remark}

\subsection{Steinberg-type representations}

It will be convenient to introduce notation for a class of representations of $\Gl$ which occur below.  Fix a
Frobenius element $F \in \Gl$.  For a homomorphism $t : I_\ell \to \kp$, 
we extend $t$ to a function $t : \Gl \to \kp$ by setting $t(F^n i)=t(i)$ for $i \in I_\ell$.  Given a semisimple representation
$\sigma : \Gl \to \GL_n(\kp)$ and a nilpotent matrix $N$ such that $\sigma N \sigma^{-1} = \cycp N$, define a
representation
$$\rho_t(\sigma,N) : \Gl \to \GL_n(\kp)$$
by
$$\rho_t(g) = \sigma(g) \exp(Nt(g)).$$
Note that
$$H^0\bigl(\Gl,\rho_t(\sigma,N) \bigr) = \{ v \in k^n \mid \sigma(F)v=v \text{~and~} \sigma(i)\exp(Nt(i))v =v \text{~for all } i \in I_\ell \}.$$

We will specifically make use of the generalized Steinberg representations
$$\St_2(\tp) = \rho_{\tp} \bigl( \cycp^{1/2} \oplus \cycp^{-1/2}, N_1 \bigr)$$
$$\St_3(\tp) = \rho_{\tp} \bigl(\cycp\oplus \trr \oplus \cycp^{-1},N_2 \bigl)$$
$$\St_{2,2}(\tp) = \rho_{\tp} \bigl(\cycp\oplus \trr \oplus \trr \oplus \cycp^{-1},N_3 \bigl)$$
$$\St_{4}(\tp) = \rho_{\tp} \bigl(\cycp^{3/2}\oplus \cycp^{1/2} \oplus \cycp^{-1/2} \oplus \cycp^{-3/2},N_4 \bigl)$$
with
$$N_1 = \left( \begin{array}{cc} 0 & 1 \\  0 & 0 \end{array} \right); \qquad
N_2 = \left( \begin{array}{ccc} 0 & -2 & 0 \\  0 & 0 & 1 \\ 0 & 0 & 0 \end{array} \right);$$
$$N_3 = \left( \begin{array}{cccc} 0 & 1 &  0 & 0 \\  0 & 0 & 0 & 0 \\ 0 & 0 & 0 & -1 \\  0 & 0 & 0 & 0 \end{array} \right)
N_4 = \left( \begin{array}{cccc} 0 & 1 &  0 & 0 \\  0 & 0 & 1 & 0 \\ 0 & 0 & 0 & -1 \\  0 & 0 & 0 & 0 \end{array} \right).$$
One checks easily that all of these parameters have image in $\GSp_4(\kp)$.
Using the description of $H^0$ given above, one immediately checks that
for  $\chi : \Gl \to \kp^{\times}$ any character, we have
$$H^{0}\bigl(\Gl,\St_2(\tp) \otimes \chi \bigr) = H^0\bigl(\Gl,\kp(\chi\cycp^{1/2}) \bigr);$$
$$H^{0}\bigl(\Gl,\St_3(\tp) \otimes \chi \bigr) = H^0\bigl(\Gl,\kp(\chi\cycp) \bigr);$$
$$H^{0}\bigl(\Gl,\St_{2,2}(\tp) \otimes \chi\bigr) = H^0\bigl(\Gl,\kp(\chi\cycp) \bigr).$$
$$H^{0}\bigl(\Gl,\St_{4}(\tp) \otimes \chi\bigr) = H^0\bigl(\Gl,\kp(\chi\cycp^{3/2}) \bigr).$$

\subsection{Proof of the proposition \ref{nss}}

If $\rho : \Gl \to \GL_2(\Kpp)$ is a two-dimensional representation, we write $\ad \rho$ (resp.\ $\ad^0 \rho$) for its four-dimensional
adjoint representation (resp.\ three-dimensional trace zero adjoint representation).

We will
repeatedly use the fact that any reducible two-dimensional representation $\Gl \to \GL_2(\kp)$ with semisimplification $\cycp^{1/2} \oplus \cycp^{-1/2}$ and of
Artin conductor $\ell$ has the form $\St_2(\tp)$ for some
$\tp : I_\ell \to \kp$; this is a straightforward and familiar Galois cohomology calculation.

\subsubsection{Group I}

This is the case where $\rppl$ is diagonal.  One immediately computes that
$$\arl \cong \trr^2 \oplus \cop \oplus \cop^{-1} \oplus \ctp \oplus \ctp^{-1} \oplus \cop\ctp \oplus \cop^{-1}\ctp^{-1} \oplus
\frac{\cop}{\ctp} \oplus \frac{\ctp}{\cop}$$
for characters $\chi_1,\chi_2 : \Ql^{\times} \to \C^{\times}$.  The representation $\artsl$
has non-trivial $\Gl$-invariants if and only if one of the characters above equals $\cycp^{-1}$.  This in turn can only occur for infinitely many $p$ if one of
the characters
$$\trr, \chi_1, \chi_1^{-1}, \chi_2, \chi_2^{-1}, \chi_1\chi_2, \chi_1^{-1}\chi_2^{-1}, \frac{\chi_1}{\chi_2}, \frac{\chi_2}{\chi_1}$$
equals the absolute value character $\nu$.  However, each of these cases corresponds to one of the groups II--VI below and thus does not occur in group I.

\subsubsection{Group II}

In this case we have
$$\rppls \cong \cp^2 \oplus \cp\cycp^{1/2} \oplus \cp\cycp^{-1/2} \oplus 1$$
for a character $\chi : \Ql^{\times} \to \C^{\times}$ with $\chi^2 \neq \nu^{\pm 1}$ and $\chi \neq \nu^{\pm 3/2}$.
By Proposition~\ref{prop:red}, after excluding finitely many $\p$ we may assume that
$$\rbppl \cong \cbp^2 \oplus \cbp \St_2(\tp) \oplus 1$$
for some non-trivial homomorphism $\tp : \Il \to \kp$.  One then computes that
$$\arbl \cong \trr \oplus \cbp^2 \oplus \cbp^{-2} \oplus \cbp\St_2(\tp) \oplus \cbp^{-1}\St_2(\tp) \oplus \St_3(\tp).$$
Thus
$$H^{0}\bigl(\Gl,\arbl(1)\bigr) \cong H^{0}\bigl(\Gl,\cycbp \oplus \cbp^2\cycbp \oplus \cbp^{-2}\cycbp \oplus \cbp\cycbp^{3/2} \oplus
\cbp^{-1}\cycbp^{3/2} \oplus \cycbp^{2} \bigr).$$
By the assumptions on $\chi$, none of these characters has invariants for infinitely many $\p$, as desired.

\subsubsection{Group III}

In this case we have
$$\rppls \cong \cp\cycp^{1/2} \oplus \cp\cycp^{-1/2} \oplus \cycp^{1/2} \oplus \cycp^{-1/2}$$
for a character $\chi : \Ql^{\times} \to \C^{\times}$ with $\chi \neq 1$ and $\chi \neq \nu^{\pm 2}$.
By Proposition~\ref{prop:red} we thus have that for all but finitely $\p$
$$\rbppl \cong \cbp\St_2(\tp) \oplus \St_2(-\tp)$$
for some non-trivial homomorphism $\tp : \Il \to \kp$.  (The opposite signs in the Steinberg representations are required to make the image symplectic.)
One computes that
$$\arbl\cong \cbp \St_3(\tp) \oplus \cbp^{-1} \St_3(-\tp) \oplus \St_{2,2}(\tp).$$
Therefore
$$H^{0}\bigl(\Gl,\arbl(1) \bigr) \cong H^{0}\bigl(\Gl,\cbp\cycbp^2 \oplus \cbp^{-1}\cycbp^2 \oplus \cycbp^2 \bigr).$$
As before, this vanishes for all but finitely many $\p$ by the assumptions on $\chi$.

\subsubsection{Group IV}

In this case we have
$$\rppls \cong \cycp^{3/2} \oplus \cycp^{1/2} \oplus \cycp^{-1/2} \oplus \cycp^{-3/2}$$
Thus for all but finitely many $\p$ we have
$$\rbppl \cong \St_4(\tp)$$
for some non-trivial homomorphism $\tp : \Il \to \kp$. 
A straightforward computation then shows that
$$H^0\bigl(\Gl,\arb(1) \bigr) = H^0\bigl(\Gl,\cycbp^4\bigr)$$
which vanishes for all but finitely many $\p$.

\subsubsection{Group V}

In this case we have
$$\rppls \cong \cycp^{1/2} \oplus \cycp^{1/2}\xi_\p \oplus \cycp^{-1/2}\xi_\p \oplus \cycp^{-1/2}$$
for a non-trivial quadratic character $\xi : \Gl \to \C^\times$.  For all but finitely many $\p$ we thus have
$$\rbppl \cong \St_2(\tp) \oplus \xi\St_2(\tp)$$
for some non-trivial $\tp : \Il \to \kp$.  It follows that
$$\arb \cong \St_3(\tp)^2 \oplus \xi_\p\St_{2,2}(\tp),$$
so that
$$H^{0}\bigl(\Gl,\arb(1)\bigr) \cong H^{0}\bigl(\Gl,\cycbp^2 \oplus  \cycbp^2 \oplus \cycbp^2\xi_\p \bigr)$$
which vanishes for all but finitely many $\p$.

\subsubsection{Group VI}

This case is identical to Group V except without the quadratic character $\xi$.  The argument proceeds as before.

\subsubsection{Group  VII}

In this case we have
$$\rppls \cong \chi_\p \wpp \rss' \oplus \rss$$
for a supercuspidal representation $\psi$ of $\GL_2(\Ql)$ with associated Galois representation
$\rss : \Gl \to \GL_2(\Kp)$ and central character $\wp$ together with
 a character $\chi : \Ql^{\times} \to \C^{\times}$.
One computes that
$$\ar \cong \ad \rho_{\psi,\p} \oplus  \chi_\p\ad^0 \rho_{\psi,\p} \oplus \chi_\p^{-1}\ad^0 \rbss.$$
Each of these subrepresentations has generically trivial invariants after twisting by any character by \cite{Weston}.

\subsubsection{Group VIII}

This is identical to the preceeding case except without the character $\chi$.  The argument proceeds as before.

\subsubsection{Group IX}
Fortunately, for Group IX it suffices to consider the semi-simplification, which is identical to Group VII with $\chi$ equal to the product of $\nu$ and a quadratic character.

\subsubsection{Group X}

In this case we have
$$\rppls \cong \wpp \oplus \rss \oplus \trr$$
for a supercuspidal representation $\psi$ of $\GL(2)$ with $ \wp \neq \nu^{\pm 1}$.
It follows that
$$\arb \cong \trr \oplus \wpp \oplus \wpp^{-1} \oplus \rbss^{\tr} \oplus \wpp^{-1} \rbss^{\tr} \oplus \ad^0 \rbss$$
Since $\rbss$ is generically irreducible, the generic vanishing follows again from \cite{Weston} and the fact that $\wpp \neq \nu^{\pm 1}$.

\subsubsection{Group XI}

We now have
$$\rppls \cong \cycp^{1/2} \oplus \cycp^{-1/2} \oplus \rbss$$
for a supercuspidal representation $\psi$ of $\GL(2)$ with $\wp = 1$.  By Proposition~\ref{prop:red} we thus have
$$\rbppls \cong \St_2(\tp) \oplus \rbss$$
for a non-trivial homomorphism $\tp : \Il \to \Kp$ for all but finitely many $\p$.
In this case we have
$$\arb \cong \St_3(\tp) \oplus \ad^0 \rbss \oplus \cycbp^{-1/2} \left[ \begin{array}{cc} \cycbp \rbss^{\tr} & \tp \rbss^{\tr} \\
0 & \rbss^{\tr} \end{array} \right].$$
Even the semi-simplifcation of this has generically trivial Galois invariants after a cyclotomic twist, completing the proof.

\section{Supercuspidal case}

 Recall that the Galois group $\Gl$
has a filtration
$$P_\ell \triangleleft I_\ell \triangleleft \Gl$$
(here $P_\ell$ denotes the wild inertia group) such that $P_\ell$ is pro-$\ell$,
$$I_{\ell}/P_\ell \cong \Zhat / \Z_\ell \qquad \text{and} \qquad \Gl/I_{\ell} \cong \Zhat.$$
It follows that for any finite Galois extension $K/\Ql$, there is a corresponding filtration
$$P(K/\Ql) \triangleleft I(K/\Ql) \triangleleft \Gal(K/\Ql)$$
with $P(K/\Ql)$ an $\ell$-group, $I(K/\Ql)/P(K/\Ql)$ cyclic of order prime to $\ell$ and $\Gal(K/\Ql)/I(K/\Ql)$ cyclic.
Note in particular that none of the groups $A_n$ or $S_n$ for $n \geq 4$ are quotients of $\Gl$ for $\ell \neq 2$.
(This is clear for $n \geq 5$ since then $A_n$ is not even solvable.  For $A_4$, it results from observing that $A_4$ has
no such filtration.)

\begin{prop} \label{prop:subgp}
Assume that $\ell \neq 2$.
Let $\rho : \Gl \to \GSp_4(\C)$ be an irreducible representation with finite image.  Then there is a quartic Galois extension $M/\Ql$ with
Galois group $\Z/2 \times \Z/2$  and a character $\chi : G_M \to \C^{\times}$ such that
$\rho \cong \Ind^{\Gl}_{G_M} \chi$.
\label{sc}
\end{prop}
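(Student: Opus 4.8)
The plan is to exploit the filtration $P_\ell \triangleleft I_\ell \triangleleft G_\ell$ together with the constraint that $\rho$ has image in $\GSp_4$, i.e.\ the $4$-dimensional representation is symplectically self-dual. First I would observe that since $P_\ell$ is pro-$\ell$ and $\ell \neq 2$, while the image of $\rho$ is finite, the restriction $\rho|_{P_\ell}$ is semisimple; more importantly, the projective image $\bar\rho(G_\ell) \subset \mathrm{PGSp}_4(\C)$ is a finite group admitting the filtration constraint noted in the excerpt — in particular it has a normal subgroup (the image of $P_\ell$) that is an $\ell$-group, with cyclic-by-cyclic quotient. The strategy is to show that such a $\rho$ cannot be \emph{primitive} (not induced from any proper subgroup), so that it is induced from a character of an index-$2$ or index-$4$ subgroup, and then to pin down that the inducing subgroup must be $G_M$ for a $(\Z/2)^2$-extension $M/\Ql$.

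The key steps, in order, would be: (1) Reduce to the projective image $H := \bar\rho(G_\ell) \subset \mathrm{PGSp}_4(\C) \cong \mathrm{PGSp}_4$; note $\mathrm{PGSp}_4 \cong \mathrm{SO}_5$, and the condition that $\rho$ be irreducible and symplectic. (2) Classify the possibilities for $H$: it is solvable (by the filtration, having a normal $\ell$-group with metacyclic quotient), so $\rho$ is a monomial representation by a theorem on solvable-image representations — i.e.\ $\rho \cong \Ind_{G'}^{G_\ell} \chi$ for a character $\chi$ of some subgroup $G'$ of index $4$ (since $\dim \rho = 4$ and $\chi$ is $1$-dimensional; the index must divide $4$, and if it were $1$ or $2$ the representation would be reducible or would not be irreducible-symplectic in the required way). (3) Identify $G' = G_M$ with $[\Gl : G_M] = 4$, and show $M/\Ql$ is Galois with group $(\Z/2)^2$: the index-$4$ subgroup corresponds to a quartic extension, and the symplectic self-duality $\rho \cong \rho^\vee \otimes (c\circ\rho)$ forces the Galois closure to have a group acting on the four cosets through a transitive subgroup of $S_4$ that preserves the symplectic pairing; combined with the fact (noted in the excerpt) that $A_4, S_4$ are \emph{not} quotients of $G_\ell$ for $\ell \neq 2$, the only remaining transitive options are $\Z/4$ and $(\Z/2)^2$, and the $\Z/4$ case would make $\rho$ a sum of two $2$-dimensional induced pieces or otherwise fail irreducibility/symplecticity, leaving $M/\Ql$ Galois with group $(\Z/2)^2$.

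I expect the main obstacle to be step (2)–(3): rigorously ruling out that $\rho$ is induced from a $2$-dimensional (rather than $1$-dimensional) representation of an index-$2$ subgroup, or from a character of a non-normal quartic subfield whose Galois closure has group $D_4$. The $D_4$ (dihedral of order $8$) case is the delicate one — $D_4$ \emph{does} satisfy the metacyclic filtration constraint and \emph{is} a possible Galois group over $\Ql$ for $\ell \neq 2$, so one cannot eliminate it purely group-theoretically from the $G_\ell$-structure. The way around this is to use that $\rho$ lands in $\GSp_4$, not just $\GL_4$: one checks that $\Ind_{G'}^{G_\ell}\chi$ for $G'$ of index $4$ with $D_4$-closure is symplectically self-dual only when the relevant transfer/norm conditions on $\chi$ degenerate, and in those degenerate cases the representation either becomes reducible or the inducing field already contains a $(\Z/2)^2$-subextension from which one can re-induce. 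So the real content is a short case analysis of self-duality of monomial representations over the local field $\Ql$, using that the only transitive subgroups of $S_4$ that can be Galois groups over $\Ql$ ($\ell$ odd) are $\Z/4$, $(\Z/2)^2$, and $D_4$, and that the first and third do not yield an irreducible symplectic $\rho$ except by reduction to the second.
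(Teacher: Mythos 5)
Your plan hinges on the claim that solvability of the image forces $\rho$ to be monomial, i.e., induced from a one-dimensional character. This is not a theorem: $\mathrm{SL}_2(\mathbf F_3)$ is solvable, has a normal Sylow $2$-subgroup $Q_8$ with abelian (even Hall) complement $\Z/3$, and yet its faithful $2$-dimensional representation is primitive. So ``normal Sylow $\ell$-subgroup with metacyclic prime-to-$\ell$ quotient'' does not by itself imply monomiality, and appealing to ``a theorem on solvable-image representations'' leaves a genuine gap exactly where the work is. Because you assume monomiality at the outset, your proposal never has to confront the possibility that $\rho$ is primitive, or that $\rho$ is induced from a $2$-dimensional primitive representation of an index-$2$ subgroup; both possibilities have to be ruled out, not assumed away.

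The paper's proof attacks precisely these cases head-on. It introduces the imprimitivity degree $d \in \{1,2,4\}$ (the size of a maximal $G$-stable direct-sum decomposition of $V$) and treats each value: for $d=1$ it invokes the classification of primitive finite subgroups of $\GSp_4(\C)$ (citing Martin) and observes that each such group has $(\Z/2)^4$ as a quotient, which the filtration on $\Gl$ forbids for $\ell \neq 2$; for $d=2$, the restriction to the index-$2$ stabilizer is a primitive $2$-dimensional representation, whose projective image must be $A_4$, $S_4$, or $A_5$ by the classical classification, again impossible for $\ell\neq 2$; and for $d=4$ it eliminates $A_4$ and $S_4$ as in your step~(3), reduces to $S\in\{\Z/4,(\Z/2)^2,D_4\}$, and then invokes a computation in the Weyl group of $\GSp_4$ to show only $(\Z/2)^2$ yields a symplectic image. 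So your treatment of the last step is broadly in the same spirit as the paper's (which is itself terse there), but the core of the argument — ruling out primitivity in dimensions $4$ and $2$ — is what your proposal is missing, and it cannot be recovered from solvability alone. To repair the proposal, replace the monomiality claim with the paper's degree-$d$ case split and the two primitive-subgroup classifications.
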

\begin{proof}
Let $V = \C^4$ endowed with a $\Gl$-action via $\rho$.  Let $G$ denote the image of $\rho$.  Let $d$ denote the largest divisor of $4$ with the
following property: there exist a direct sum decomposition $V = W_1 \oplus \cdots \oplus W_d$ such that $G$ acts on the set
$\{ W_1, \ldots, W_d \}$. 
Note that each $W_i$ is then necessarily of dimension $\frac{4}{d}$.  Further, by irreducibility
$G$ acts transitively on $\{W_1, \ldots, W_d \}$.
We consider the three possible values of $d$ separetely.

If $d=1$, then $\rho$ is said to be a {\it primitive} representation.  The primitive represenations of $\GSp_4$ have been classified; see for example
\cite{Martin}.  However, each group which occurs as the image of a primitive representation of $\GSp_4$ has $(\Z/2)^4$ as a quotient.
By the structure given for quotients of $\Gl$ preceeding the proposition, this is impossible for $\ell \neq 2$.

If $d=2$, the action of $G$ on $\{ W_1, W_2 \}$ yields an exact sequence
$$1 \to H \to G \to S_2 \to 1$$
with $H$ the common stabilizer of $W_1$ and $W_2$.  The representation $\rho : H \to \GL(W_1)$ is itself primitive (since otherwise we would
have $d=4$).  The primitive subgroups of $\GL_2(\C)$ are beyond well known: they have projective image $A_4$, $S_4$ or $A_5$.
As observed above, none of these groups is a quotient of $\Gl$, so this case can not occur.

If $d=4$, the action of $G$ on $\{ W_1,W_2,W_3,W_4\}$ yields an exact sequence
$$1 \to H \to G \to S \to 1$$
with $S$ a transitive subgroup of $S_4$.  As observed above, $S$ can not be $A_4$ or $S_4$.  It is therefore either cyclic of order $4$,
the subgroup $V \cong \Z/2 \times \Z/2$, or a $2$-Sylow subgroup of $S_4$ (which is isomorphic to the dihedral group $D_4$).
In each case let $H'$ denote the stabilizer of $W_1$ (this is $H$ in the first two cases and a subgroup of $G$ containing $H$ with index $2$ in
the last case) and let $\chi : H' \to \C^{\times}$ denote the character giving the action of $H'$ on $W_1$.  Then one checks easily that
$\rho \cong \Ind_{H'}^{G} \chi$.

However, such a representation a priori take values in $\GL_4(\C)$, not $\GSp_4(\C)$.  A simple explicit computation with the Weyl group of
$\GSp_4$ allows one to determine that only in the case $S=V$ can the representation be conjugated to land in $\GSp_4(\C)$.
\end{proof}

\begin{prop} \label{prop:sc}
Assume that $\ell \neq 2$.  Let $\pi_\ell$ be a supercuspidal representation of $\GSp_4(\Ql)$.  Then
$$H^0(\Gl,\arb (1) ) = 0$$
for all but finitely many primes $\p$.
\end{prop}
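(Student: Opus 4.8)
The plan is to deduce the statement from purity of $\rpp$ at $\ell$, using only that a supercuspidal local component forces $\rpp|_{\Il}$ to have finite image; in particular I would not really need $\ell \ne 2$.

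First I would set up the reduction. Since $\pi_\ell$ is supercuspidal, the Weil--Deligne representation $\operatorname{rec}_{GT}(\pi_\ell\otimes|c|^{-3/2})=WD(\rpp|_{\Gl})^{\text{Fr-ss}}$ has trivial monodromy $N=0$, so $\rpp|_{\Il}$ factors through a finite quotient whose order $m$ is bounded by a constant depending only on $\pi_\ell$ (it is the order of the image of inertia under the fixed local parameter). Discarding the finitely many $\p$ with $p\mid m$, the group $\Il$ acts on $\ar$ through a group of order prime to $p$, so the formation of $\Il$-invariants commutes with reduction mod $\p$; since $\cycp$ is unramified at $\ell\ne p$, this gives
$$H^{0}\!\bigl(\Gl,\arbl(1)\bigr)\;\cong\;\bigl\{\,v\in\overline{(\ar)^{\Il}}\ \big|\ \Frob_\ell\cdot v=\cycbp(\Frob_\ell)^{-1}v\,\bigr\},$$
so this space vanishes unless $\cycbp(\Frob_\ell)^{-1}=\ell^{\pm1}\bmod p$ is an eigenvalue of $\Frob_\ell$ on the reduction mod $\p$ of the $\Il$-invariants in a Galois-stable $\O$-lattice of $\ar$.

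The key input is then purity at the bad prime $\ell$. Because $\pi_\ell$ is supercuspidal it is tempered, so by full local--global compatibility (Theorem~\ref{mainthm}(ii)) $\rpp|_{\Gl}$ is pure of weight $\kappa=w+3$ at $\ell$; hence $\ar|_{\Gl}$ is pure of weight $0$, and every eigenvalue of $\Frob_\ell$ on $(\ar)^{\Il}$ has complex absolute value $1$ under $\iota_p$. On the other hand the characteristic polynomial $P(X)=\det\!\bigl(X-\Frob_\ell\mid(\ar)^{\Il}\bigr)$ does not depend on $\p$: its coefficients are symmetric functions of the Frobenius eigenvalues on the inertia-invariants of the adjoint of the fixed Weil--Deligne representation $\operatorname{rec}_{GT}(\pi_\ell\otimes|c|^{-3/2})$, hence are fixed algebraic numbers in a number field $E'$ independent of $\p$. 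Since $|\ell^{\pm1}|=\ell^{\pm1}\ne1$, the number $\ell^{\pm1}$ is not a root of $P$, so $P(\ell^{\pm1})$ is a fixed nonzero element of $E'$ and has positive $\p$-adic valuation for only finitely many $\p$. Combining with the previous paragraph, $H^{0}(\Gl,\arbl(1))\ne0$ forces $\p\mid P(\ell^{\pm1})$, so it vanishes for all but finitely many $\p$.

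I expect the one substantive point here to be the appeal to purity at $\ell$ (temperedness of supercuspidal representations, hence weight $0$ for the adjoint); the rest is bookkeeping. I would also record the more explicit route suggested by Proposition~\ref{prop:subgp}, which is where $\ell\ne2$ actually enters: one first twists $\rpp|_{\Gl}$ by a fixed power of $\cycp$ to assume it is the reduction of a fixed finite-image representation $\rho^{\circ}_\ell$; when $\rho^\circ_\ell$ is irreducible, Proposition~\ref{prop:subgp} writes $\rho^\circ_\ell\cong\Ind_{G_M}^{\Gl}\chi$ with $M/\Ql$ biquadratic (so tame, as $\ell\ne2$) and $\chi$ of finite order, and then the projection formula together with Mackey's formula (legitimate since $G_M\triangleleft\Gl$) gives $\End(\rbppl)\cong\bigoplus_{\sigma\in\Gal(M/\Ql)}\Ind_{G_M}^{\Gl}\overline{(\chi/\chi^{\sigma})}$; Shapiro's lemma then reduces the claim to the elementary fact that a fixed finite-order character of $G_M$ agrees mod $\p$ with $\cycbp|_{G_M}^{-1}$ for only finitely many $\p$. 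The reducible ($2+2$ endoscopic) case is handled factor by factor, or by invoking the $\GL(2)$ computations of \cite{Weston}.
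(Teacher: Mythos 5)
Your primary argument (via purity at the bad prime $\ell$) is correct but genuinely different from the paper's. The paper instead classifies irreducible finite-image representations $\Gl \to \GSp_4(\C)$ (Proposition~\ref{prop:subgp}): for $\ell\neq 2$ the parameter $\mu$ of a supercuspidal $\pi_\ell$ is either induced from a character of a biquadratic extension $M/\Ql$, or a sum of two irreducibles induced from characters of quadratic fields; the adjoint then breaks into ratios of Galois-conjugate characters $\chi^\sigma_\p/\chi^\tau_\p$, and the paper checks directly that no such ratio can equal $\nu$ (using that $\Gal(M/\Ql)$ acts trivially on $G_M^{\ab}$ modulo inertia), from which the finiteness of exceptional $\p$ follows. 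Your route bypasses all this group theory: supercuspidal~$\Rightarrow$~tempered~$\Rightarrow$ (via Theorem~\ref{mainthm}(ii)) $WD(\rppl)$ is pure of weight $\kappa$, so $\ad\rppl$ is pure of weight $0$, so the $\p$-independent characteristic polynomial of $\Frob_\ell$ on $(\ad\rppl)^{\Il}$ cannot have $\ell^{\pm1}$ among its roots, and then $\p\nmid P(\ell^{\pm1})$ rules out invariants for all but finitely many $\p$. The one point to be careful about is that purity of $WD(\rppl)$ at ramified $\ell$ is not literally part of the statement of Theorem~\ref{mainthm} (item (v) there only concerns unramified primes), so you are implicitly using the additional, though standard, deduction ``temperedness $+$ local-global compatibility $\Rightarrow$ purity'' for the $N=0$ case; it would be worth citing or spelling this out. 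Also be a bit more careful with the claim that $P(X)$ is $\p$-independent: what one actually uses is that $WD(\rppl)|_{\Il}$ and the char.\ poly of Frobenius on $\End_{\Il}$ are determined by the Frobenius-semisimplification, hence by the fixed Gan--Takeda parameter; your one-line justification glosses this. Finally, note what your approach buys: it never uses $\ell\neq 2$, so if carried out fully it would remove the ``not supercuspidal at $2$'' hypothesis from Theorem 1.1 — that is a real (if modest) strengthening, and is presumably why the authors' own proof takes the longer explicit route. Your secondary ``explicit route'' is essentially the paper's proof (with Mackey/Shapiro substituting for the paper's hands-on character sums); your phrasing ``twist $\rpp|_{\Gl}$ by a power of $\cycp$ to assume it is the reduction of a fixed finite-image representation'' should be corrected to say that one twists the complex Weil--Deligne parameter $\mu$ to have finite image and then realizes its $\p$-adic avatars, since the $p$-adic representation $\rppl$ itself is not literally a reduction of anything complex.
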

\begin{proof}
Let $\mu : W_{\Ql} \to \GSp_4(\C)$ be the Weil-Deligne representation associated to $\pi_\ell$ by \cite{GT}; it is either irreducible (as a $\GL_4(\C)$ representation) or
of the form $\mu_1 \oplus \mu_2$ with $\mu_i$ irreducible two-dimensional representations of $\Gl$
with $\det \mu_1 = \det \mu_2$. (In both cases the monodromy $N = 0$.) After twisting $\mu$ by a power of the norm character, we may assume that $\mu$ has finite image. Hence we may regard $\mu$ as a representation $\Gl \to \GSp_4(\C)$ as in proposition \ref{sc}.

Let $M$ denote the biquadratic extension of $\Ql$ from proposition \ref{sc}.  For an element $\sigma \in \Gal(M/\Ql)$ and a character
$\chi : G_M \to \C^{\times}$, we write $\chi^{\sigma}$ for the character given by
$$\chi^{\sigma}(g) = \chi(\tilde{\sigma} g \tilde{\sigma}^{-1})$$
for some choice of $\tilde{\sigma} \in \Gl$ lifting $\sigma$; the definition of $\chi^{\sigma}$ is independent of this choice.
We will in fact show that $H^0(G_M,\arb(1))$ vanishes for all but finitely
many $\p$.

In the irreducible case, Proposition~\ref{prop:subgp} shows that $\mu$ is isomorphic to $\Ind_{M}^{\Ql} \chi$ for
a character $\chi : G_M \to \C^{\times}$.
It follows that for every prime $\p$ we have
$$\rbppls \cong \Ind_{M}^{\Ql} \bar{\chi}_\p.$$
In particular, $$\rbpp^{\text{ss}}|_{G_M} \cong \underset{\sigma \in \Gal(M/\Ql)}{\oplus} \chi^{\sigma}_\p.$$
The representation $\arb|_{G_M}$ is thus simply a sum of characters of the form $\chi^{\sigma}_\p/\chi^{\tau}_\p$ for
$\sigma,\tau \in \Gal(M/\Ql)$.  It thus suffices to show that $\chi^{\sigma}/\chi^\tau \neq \nu$ for any
$\sigma,\tau \in \Gal(M/\Ql)$.   However, this is easily seen since the action of $\Gal(M/\Ql)$ on $G_M^{\ab}$ is trivial modulo
inertia.

The argument in the other case is similar: we now have
$$\rbppls \cong \Ind_{M_1}^{\Ql} \bar{\chi}_{1,\p} \oplus \Ind_{M_2}^{\Ql} \bar{\chi}_{2,\p}$$
for characters $\chi_1 : G_{M_1} \to \C^{\times}$ and $\chi_2 : G_{M_2} \to \C^{\times}$ with $M_1$ and $M_2$ quadratic and
$\chi_1 \cdot \chi_1^{\sigma_1} = \chi_2 \cdot \chi_2^{\sigma_2}$ where $\sigma_i \in \Gal(M/\Ql)$ generates $\Gal(M_i/\Ql)$.
Thus $\arb|_{G_M}$ is simply a sum of ratios of $\Gal(M/\Ql)$-conjugates of $\chi_{1,\p}$ and $\chi_{2,\p}$.  As before, there is certainly no
way such a ratio with the same character can equal $\nu$, so we are left to consider a ratio
$\chi^{\sigma}_{1}/\chi^{\tau}_{2}$.  Acting by $\sigma^{-1}$, we may assume that $\sigma = 1$.

Suppose therefore that $\chi_1/\chi^{\tau}_2 = \nu$.  Acting by $\sigma_1$ and multiplying, we obtain
$$\nu^2 = \frac{\chi_1\chi_1^{\sigma_1}}{\chi_2^{\tau}\chi_2^{\tau\sigma_1}} = \frac{\chi_2\chi_2^{\sigma_2}}{\chi_2^{\tau}\chi_2^{\tau\sigma_1}}.$$
Since at least one factor in the numerator cancels at least one factor in the denominator, we are left in the situation of ratios of conjugates of a single
character, as before.

\end{proof}

\section{The case $\ell = p$}

\subsection{Fontaine-Laffaille theory}
We recall the theory of Fontaine-Laffaille adapted to our situation, following the summary in \cite[\S 1.4]{BLGGT}.
We have at hand a representation $\rho: \Gp \rightarrow \GSp_4(\O)$, where $\O$ is the valuation ring of a finite extension of $\Q_p$ with residue field $k$. In this section, we shall mostly ignore the symplectic structure preserved by $\rho$ and view $\rho$ as a representation valued in $\GL_4(\O)$. This suffices for our purposes.

Let $\MF_\O$ be the category of filtered Dieudonne $\O$-modules. Objects in $\MF_\O$ consist of triples $(D, (D^i)_{i \in \Z}, (f_i)_{i \in \Z})$ where $D$ is a finitely generated $\O$-module, $(D^i)_{i \in \Z}$ is a decreasing filtration of $D$ by $\O$-module direct summands, and $(f_i: D^i \rightarrow D)_{i \in \Z}$ are $\O$-linear maps satisfying:
\begin{itemize}
\item $D^i = D$ for $i  \ll 0$; $D^i = (0)$ for $i \gg 0$;
\item $f_i|_{D^{i+1}} = p\cdot f_{i+1}$ for all $i$;
\item $D = \sum_{i \in \Z} f_i(D^i)$.
\end{itemize}

A morphism $(D, (D^i), (f_i)) \rightarrow (E, (E^i), (g_i))$ in $\MF_\O$ consists of an $\O$-linear map $t: D \rightarrow E$ satisfying $t(D^i) \subset E^i$ and $t \circ f_i =g_i \circ t$ for all $i$. 

We denote by $\MF_k$ the full subcategory of $\MF_\O$ consisting of objects annihilated by $\p$ (the maximal ideal of $\O$).

Let $\Rep_{\O}(\Gp)$ be the category of finitely generated $\O$-modules with a continuous $\Gp$-action. There exists an exact fully faithful (covariant) functor of $\O$-linear categories $${\bf G}: \MF_\O \longrightarrow \Rep_{\O}(\Gp)$$ which commutes with Tate twists and preserves the length (and rank) of the underlying $\O$-modules. The essential image of $\bf G$ is closed under taking sub-objects and quotients. See \cite[\S 2.4.1]{CHT} for a definition of the functor $\bf G$. This functor $\bf G$ restricts to a functor $${\bf G}: \MF_k \longrightarrow \Rep_k(\Gp).$$ 
 
 \begin{definition} For an object $D \in  \MF_k$ let $\WFL(D)$ be the multiset of integers $i$ such that $\textrm{gr}^i (D) := D^{i}/D^{i+1} \neq (0)$, with $i$ being counted with multiplicity equal to the $k$-dimension of this gradation.
 \end{definition}

We list some key properties of the functor $\bf G$ and the category $\MF_k$ that shall be useful for us:
\begin{enumerate}[label = (\Alph*)]
\item (\cite[\S 1.4]{BLGGT}) If $M$ is a $p$-torsion free object of $\MF_\O$, then ${\bf G}(M)\otimes_{\Z_p}\Q_p$ (= ${\bf G}(M)[1/p]$) is a crystalline representation of $\Gp$ with Hodge--Tate weights $$\mathrm{HT}(M) = \WFL(M \otimes_{\O} k).$$
\item (\cite[\S 1.4]{BLGGT}) If $L$ is a $\Gp$-invariant lattice in a crystalline representation $V \in \Rep_{\O}(\Gp)$ with all its Hodge--Tate weights
 in the range $[0, p-2]$, then $L$ (and hence $L/\p L$) is in the image of $\bf G$.  
\item (\cite[\S 1.10 (b)]{FL}) Morphisms in $\MF_k$ are strict with filtrations. Namely, if $t: M \rightarrow N$ is such a morphism, then $$t(M^i) = t(M)\cap N^i.$$
for any $i$. (A proof of this particular version of this statement can be found in \cite[proposition 2.5]{Hattori}). Consequently, if two objects $M, N \in \MF_k$ satisfy $$\WFL(M) \cap \WFL(N)  = \emptyset,$$ then \begin{equation}\Hom_{\MF_k} (M, N) = 0. \label{vanish}\end{equation}
\end{enumerate}

\subsection{Local deformations at $l = p$}

We assume that $p$ is a good prime for $\pi$, i.e., $\pi_p$ is unramified. Let $V_{\pi}$ denote a free rank $4$ $\O$-module equipped
with a $\Gp$-action via $\rpp|_{\Gp}$.  Let $w \in \Z$ be such that the central character of $\pi_{\infty}$ is of the form $c \to c^{-w}$ and let $\delta = \frac{1}{2}(w+3-a-b)$ (where $a \geq  b \geq 0$ are determined by $\pi_{\infty}$)  . It is known (\cite{U}, \cite[Theorem 3.1]{Mok}) that $V_{\pi}$ is crystalline with Hodge-Tate weights

\begin{equation}
\delta + \{0, b, a, a+b \}.\label{wts}
\end{equation}

\begin{prop}
Suppose $0< a+b < p-2$. If $b, a-b \neq 1$, then $$H^0(\Gp, \ad \rbpp(1)) = 0.$$
\end{prop}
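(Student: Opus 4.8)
The strategy is to use that the hypotheses $b\neq 1$ and $a-b\neq 1$ are exactly what make the Hodge--Tate weights of $\rbpp|_{\Gp}$ and of $\rbpp(1)|_{\Gp}$ disjoint modulo $p$, and then to read off the vanishing from the Fontaine--Laffaille dictionary recalled above. \textit{Step 1: reduction to $\rbpp$.} Since $\ad\rbpp$ is a $\Gp$-submodule of $\End(\rbpp)\cong\rbpp\otimes\rbpp^{\vee}$, twisting by $\cycbp$ gives an inclusion
$$H^0\bigl(\Gp,\ad\rbpp(1)\bigr)\ \hookrightarrow\ H^0\bigl(\Gp,\End(\rbpp)(1)\bigr)\ =\ \Hom_{\Gp}\bigl(\rbpp|_{\Gp},\,\rbpp(1)|_{\Gp}\bigr),$$
so it suffices to show the last group vanishes. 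This reduction is what makes Fontaine--Laffaille theory applicable: the weights of $\ad\rbpp$ are spread over an interval of length $2(a+b)$, which need not fit in a Fontaine--Laffaille window, whereas those of $\rbpp$ and of $\rbpp(1)$ are each spread over an interval of length only $a+b\le p-3$.

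\textit{Step 2: into the Fontaine--Laffaille range.} Since $\Hom_{\Gp}(\rbpp|_{\Gp},\rbpp(1)|_{\Gp})$ is unchanged on replacing $\rbpp$ by $\rbpp\otimes\cycbp^{n}$, and since $\delta\in\Z$ (as $w+1\equiv a+b\bmod 2$), we may pick $n$ so that both $\rbpp\otimes\cycbp^{n}|_{\Gp}$ and $\rbpp(1)\otimes\cycbp^{n}|_{\Gp}$ are reductions of $\Gp$-stable lattices in crystalline representations all of whose Hodge--Tate weights lie in $[0,p-2]$: these weights are $s+\{0,b,a,a+b\}$ and $(s\pm 1)+\{0,b,a,a+b\}$ for a suitable $s\in\{0,1\}$, and $0\le a+b\le p-3$ ensures both multisets lie in $[0,p-2]$. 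By properties (B) and (A) of the functor $\mathbf{G}$ these two reductions equal $\mathbf{G}(\overline M)$ and $\mathbf{G}(\overline M')$ for objects $\overline M,\overline M'\in\MF_{k}$ with $\WFL(\overline M)=s+\{0,b,a,a+b\}$ and $\WFL(\overline M')=(s\pm1)+\{0,b,a,a+b\}$, and full faithfulness of $\mathbf{G}$ gives
$$\Hom_{\Gp}\bigl(\rbpp|_{\Gp},\,\rbpp(1)|_{\Gp}\bigr)\ \cong\ \Hom_{\MF_{k}}(\overline M,\overline M').$$

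\textit{Step 3: disjointness and conclusion.} By property (C) this Hom vanishes as soon as $\WFL(\overline M)\cap\WFL(\overline M')=\emptyset$, that is, as soon as $\{0,b,a,a+b\}$ contains no two elements differing by $1$. The pairwise differences of $\{0,b,a,a+b\}$ form the set $\{b,\,a,\,a+b,\,a-b\}$, so it is enough that $1\notin\{b,a,a+b,a-b\}$. We are given $b\neq 1$ and $a-b\neq 1$; moreover, since $a\ge b\ge 0$, the case $a=1$ forces $b\in\{0,1\}$ and hence $a-b=1$ or $b=1$, while $a+b=1$ forces $(a,b)=(1,0)$ and hence $a-b=1$ --- so these are already excluded. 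Therefore $H^0(\Gp,\ad\rbpp(1))=0$. The only subtle points are the bookkeeping in Step 2 --- exhibiting a single twist $\cycbp^{n}$ that drops both $\rbpp$ and $\rbpp(1)$ into one Fontaine--Laffaille window --- and the elementary check in Step 3 that the stated numerical hypotheses genuinely separate the two weight multisets; the rest is formal from properties (A)--(C) and the full faithfulness of $\mathbf{G}$.
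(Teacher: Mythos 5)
Your proof is correct and takes essentially the same route as the paper: reduce to $\Hom_{\Gp}(\rbpp|_{\Gp},\rbpp(1)|_{\Gp})$, pass to $\MF_k$ via full faithfulness of $\mathbf{G}$ after an appropriate cyclotomic twist, and conclude from disjointness of the $\WFL$-multisets using property (C). You are in fact a bit more careful than the paper in two places — choosing a single twist $\cycbp^n$ that puts both $\rbpp$ and $\rbpp(1)$ simultaneously into the window $[0,p-2]$, and spelling out why $b\neq 1$, $a-b\neq 1$ (together with $a\ge b\ge 0$, $a+b>0$) already force $a\neq 1$ and $a+b\neq 1$ — but the underlying argument is identical.
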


\begin{proof}
As $\ad \rbpp(1)$ is a submodule of $\End(\rbpp)(1)$, it suffices to prove that $$\Hom_k(\rbpp, \rbpp(1))$$ has no $\Gp$ invariants. Equivalently we
may show that $\Hom_k(\rbpp(-\delta), \rbpp(1-\delta))$ has no $\Gp$ invariants. So we might as well assume $\delta =0$. After this, we are in the Fontaine-Laffaille range (because $a+b < p-2$). Thus, by the fullness of $\bf G$, it  suffices to prove that 

\begin{equation}\Hom_{\MF_k} ({\bf G}^{-1} (\rbpp |_{\Gp}), {\bf G}^{-1} (\rbpp |_{\Gp})(1)) = 0. \label{vanish2}\end{equation} 

Let $T_{\pi} \subset V_{\pi}$ be a $\Gp$ stable $\O$-lattice such that ${\bf G}(D_{\pi}) = T_{\pi}$. (Such a lattice exists by property (B) of $\bf G$.) In
particular, $${\bf G}(D_{\pi}/\p D_{\pi}) = T_{\pi}/\p T_{\pi} \cong \rbpp|_{\Gp}.$$  We know that the Hodge-Tate numbers of $V_{\pi}$ are given by 
\eqref{wts}. Our assumption on $a$ and $b$ implies that no two Hodge-Tate numbers in \eqref{wts} are consecutive. From this, by property (A), we see that $$\WFL(D_{\pi}/\p D_{\pi}) \cap \WFL(D_{\pi}/\p  D_{\pi}(1)) = \emptyset.$$
The vanishing in \eqref{vanish2} now follows from the vanishing in \eqref{vanish} of property (C).
\end{proof}

\subsection{The Ordinary Case}

We include the following result for the sake of completeness.

\begin{prop}
If $\pi_p$ is unramified and ordinary, then if $H^0(\Gp, \ad \rbpp(1)) \neq 0$ one of the following congruences must hold: 

\begin{align*}
a & \equiv \pm 1 \; \pmod{p-1}; \\
b & \equiv \pm 1\;  \pmod{p-1}; \\
a-b & \equiv \pm 1 \; \pmod{p-1}; \\
a+b & \equiv \pm 1 \; \pmod{p-1}.
\end{align*}
\end{prop}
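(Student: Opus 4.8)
The plan is to use the explicit shape of $\rbpp|_{\Gp}$ in the ordinary case and translate the non-vanishing of $H^0$ into a divisibility relation among the Hodge--Tate weights.

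First I would record the structure of $\rpp|_{\Gp}$. Since $\pi_p$ is unramified and ordinary, $\rpp|_{\Gp}$ is ordinary in the usual sense: after conjugation it is upper triangular, with diagonal characters $\chi_1,\dots,\chi_4 : \Gp \to \O^{\times}$, each of the form $(\text{unramified character})\cdot \nu_p^{\,n_i}$, where $n_1,\dots,n_4$ are the four Hodge--Tate weights $\delta + \{0,b,a,a+b\}$ of \eqref{wts} (see \cite{U} and \cite[Thm.~3.1]{Mok}). Reducing modulo $\p$, the module $\rbpp|_{\Gp}$ acquires a $\Gp$-stable full flag whose one-dimensional graded pieces are the reductions $\bar{\chi}_i = \bar{\psi}_i\,\cycbp^{\,n_i}$, with $\bar{\psi}_i$ unramified. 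If $p = 2$ then $\cycbp$ is trivial and each of the four asserted congruences holds modulo $p-1 = 1$, so there is nothing to prove; I therefore assume henceforth that $p$ is odd, so that $\cycbp|_{I_p}$ has order exactly $p-1$.

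Next I would observe that the inclusion $\gsp_4 \subseteq \mathfrak{gl}_4$ is $\Gp$-equivariant for the conjugation actions through $\rbpp$, giving a $\Gp$-equivariant injection $\arb(1) \hookrightarrow \End(\rbpp)(1)$ and hence an injection on $H^0$; so it suffices to show that $H^0\bigl(\Gp,\End(\rbpp)(1)\bigr) \neq 0$ forces one of the stated congruences. Via $\End(\rbpp) \cong \rbpp^{\vee}\otimes\rbpp$, the flag on $\rbpp|_{\Gp}$ induces a $\Gp$-stable filtration of $\End(\rbpp)(1)$ whose graded pieces are the one-dimensional characters $\bar{\chi}_i^{-1}\bar{\chi}_j\,\cycbp = \bar{\psi}_i^{-1}\bar{\psi}_j\,\cycbp^{\,n_j - n_i + 1}$ for $1 \le i,j \le 4$. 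As each graded piece is a character, these are precisely the Jordan--Hölder constituents of $\End(\rbpp)(1)$. If $H^0 \neq 0$ then the trivial character $\trr$ embeds as a subrepresentation, hence occurs among these constituents, so $\bar{\psi}_i^{-1}\bar{\psi}_j\,\cycbp^{\,n_j - n_i + 1} = \trr$ for some $i,j$.

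Finally I would restrict this identity to the inertia subgroup $I_p \subseteq \Gp$. The $\bar{\psi}_i$ being unramified, we are left with $\cycbp^{\,n_j - n_i + 1}|_{I_p} = 1$, i.e.\ $(p-1)\mid(n_j - n_i + 1)$. The shift $\delta$ cancels in $n_j - n_i$, so $n_j - n_i$ ranges over $\{0\} \cup \{\pm a, \pm b, \pm(a+b), \pm(a-b)\}$: the value $0$ would give $(p-1)\mid 1$, impossible for $p$ odd, while the eight remaining values yield exactly the congruences $a \equiv \pm 1$, $b \equiv \pm 1$, $a+b \equiv \pm 1$, $a-b \equiv \pm 1 \pmod{p-1}$. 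This completes the argument. The only input that is not entirely elementary is the ordinarity of $\rpp|_{\Gp}$ --- the passage from ``$\pi_p$ ordinary'' to the explicit triangular form with prescribed inertial action on the diagonal --- which is standard; everything else is bookkeeping with weight differences, so I do not expect a genuine obstacle here.
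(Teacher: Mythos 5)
Your proof is correct and follows essentially the same route as the paper: reduce to the Jordan--H\"older constituents of the twisted adjoint, use the ordinary triangular shape of $\rbpp|_{\Gp}$ from Urban's result to read off the inertial characters, and observe that $\cycbp|_{I_p}$ has order $p-1$. The one small variation is that you sidestep the explicit decomposition of $\gsp_4$ by embedding $\arb(1) \hookrightarrow \End(\rbpp)(1)$ and working with the $16$ constituents $\bar\chi_i^{-1}\bar\chi_j\,\cycbp$ of the full matrix algebra, whereas the paper writes down the $11$ inertial characters of $\gsp_4(1)^{\mathrm{ss}}|_{I_p}$ directly; both give the same target set of weight differences $\{0,\pm a,\pm b,\pm(a\pm b)\}$, so the congruences come out identically.

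Two minor remarks, neither a gap. First, you should be a bit careful when asserting that the reduction $\rbpp|_{\Gp}$ itself carries a full flag with the prescribed graded pieces: ordinarity gives the flag on $\rpp|_{\Gp}$ over $E_\p$, and the compatible integral statement requires choosing a suitable lattice (or one just works with the semisimplification, which is all your argument actually uses). Second, the $p=2$ case you treat separately is already handled by the paper implicitly, since the congruences are vacuous mod $1$; your explicit mention is harmless. Overall this is a faithful reconstruction of the intended argument.
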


\begin{proof}
Since $$\dim_k\,H^0(G_{\Q_p}, ad\, \rbpp(1))  \leq \dim_k\,H^0(I_p, \ad \rbpp(1))  \leq \dim_k\,H^0(I_p, \ad \rbpp(1)^{ss}),$$
it suffices to consider $I_p$ invariants on the semi-simplification $ \ad \rbpp(1)^{ss}$ .
If $\pi_p$ is unramified and ordinary, then it is known \cite[Corollaire 1]{U} (see also \cite[\S 3]{HT}) that $$\rbpp|_{I_p} \sim \begin{pmatrix}
\cycp^{a+b} & * & * & *\\
0 & \cycp^{a} & * & *\\
0 & 0 & \cycp^{b} & *\\
0 & 0 & 0 & 1 \end{pmatrix} $$

Therefore, $$(\ad \rbpp \otimes \cycp)^{ss}|_{I_p} \cong \cycp^{\oplus 3} \oplus \cycp^{b+1} \oplus \cycp^{1-b}\oplus \cycp^{a+1} \oplus \cycp^{1-a} \oplus \cycp^{a-b+1} \oplus \cycp^{1+b-a} \oplus \cycp^{a+b+1} \oplus \cycp^{1-a-b}.$$ The proposition follows by noting that $\cycp$ has order $p-1$.
\end{proof}

\bibliography{GSp4refs}

\end{document}